\definecolor{gold}{rgb}{1,.70,.0}   
\theoremstyle{plain}
\newtheorem{theorem}{Theorem}[section]            
\newtheorem{proposition}[theorem]{Proposition}  
\theoremstyle{definition}
\newtheorem{definition}[theorem]{Definition}
\numberwithin{theorem}{section}
\numberwithin{equation}{section}
\numberwithin{figure}{section}
\newcommand{\gaction}[2]{\genfrac{}{}{0.5pt}{}{#1}{#2}%
                        \!\lower2pt\hbox{\rotatebox[origin=c]{-90}{{$\looparrowright$}}}}
\newcommand{\dotfraction}[2]{\genfrac{}{}{0.5pt}{}{#1}{#2}%
                        \!\lower.5pt\hbox{{$\circ$}}}
\titleformat*{\section}{\fontsize{14pt}{14pt} \bf}        
\def\SL{\hbox{SL}}
\def\-{\hbox{\raisebox{.75pt}{-}}}
\def\id{\hbox{\rm id}\,}
\def\AAA{\circ}
\def\gen{\hbox{\rm gen}\,}
\def\depth{\delta}
\def\tspin{\hbox{\rm spin}}
\def\str{{+}}
\begin{document}

\title{Apollonian depth, spinors, \\
and the super-Dedekind tessellation}

\author{Jerzy Kocik 
    \\ \small Department of Mathematics, Southern Illinois University, Carbondale, IL62901
   \\ \small jkocik{@}siu.edu
}


\date{\small\today}

\maketitle

\begin{abstract}
\noindent
The configuration space of tricycles
(triples of disks in contact) is shown to coincide with the complex plane
resulting as a projective space costructed from the tangency and Pauli spinors.  
Remarkably, the fractal of the depth functions
assumes a particularly simple and  elegant form.
Moreover, the factor space due to a certain symmetry group
provides a parametrization of the Apollonian disk packings.
\\[5pt]
{\bf Keywords:} 
Apollonian disk packing,  Descartes theorem,  depth function, Pauli spinors,
tangency spinors, modular group, experimental mathematics. 
\\[5pt]
\scriptsize {\bf MSC:} 52C26,  
                               28A80,  
                               51M15, 
                               11F06,  	
                             15A66. 
\end{abstract}

\section{Introduction}

In the present paper we employ the concept of tangency spinors
to parametrize the space of tricycles. 
Quite surprisingly, it brings the depth fractal to a very regular form, that  of the Apollonian belt.
Additionally,  in the same picture we obtain visualization of the classification of the Apollonian disk packing.
The implied symmetry groups aextend the modular group and lead to a  ``super-Dedekind'' tessellation,
which extends the standard modular and Dedekind tessellations.\\[-12pt]

We start with recalling a few concepts.
The Apollonian depth function, introduced in \cite{jk-web},
is defined as a function $\delta:\mathbb R^3 \to \mathbb N$
(with possible values $0$ and $\infty$)
as follows:
For a given triple of numbers $(a,b,c)$, 
if any of them is non-positive, the value of $\delta$ is 0.
Otherwise, one performs a {\bf process} in $\mathbb R^3$,
one step of which consists of replacing the greatest value in the triple by
\begin{equation}
\label{eq:abc}
a+b+c-2\sqrt{ab+bc+ca} \,.
\end{equation}
Repeat this step until the new number turns negative or $0$.
The number of steps needed to achieve it defines the value of $\delta(a,b,c)$.
Here is an example for $(179,62,23)$:
$$
(179,62,23) \ \xrightarrow{~\pi~} \ (62,23,6) \ \xrightarrow{~\pi~} \ (23,6,3) \ \xrightarrow{~\pi~} \ (3,2,-1)\quad (\hbox{terminate})
$$
Thus $\delta (179,62,23) = 3$ .

\noindent
{\bf The geometric interpretation:}
Any triple of mutually tangent disks (called further a {\bf tricycle}) 
may be completed to an Apollonian disk packing.
The depth function is a measure how ``deeply'' is
the given tricycle of curvatures $(a,b,c)$  buried in this packing.
Every step  of the process described above
corresponds to replacing the smallest 
disk by the greater from the two tangent to the 
triple $(a,b,c)$.
It is to be run until the external disk of negative (or zero) curvature is reached.

Recall, that the curvatures of four mutually tangent disks 
satisfy the Descartes formula
\cite{Descartes,Soddy,Boyd,LMW,jk-Descartes,jk-Descartes2}:
\begin{equation}
\label{eq:Descartes}
(a+b+c+d)^2 = 2\,(a^2+b^2+c^2+d^2)
\end{equation}
Its quadratic nature leads in general to two solutions:
\begin{equation}
\label{eq:Descartespm}
d = a+b+c \pm 2\sqrt{ab+bc+ca}  \,,
\end{equation}
which correspond to the two different disks 
that complete a given tricycle to a Descartes configuration, 
as illustrated in Figure \ref{fig:twosolutions}.
We choose the greater disk (smaller curvature)
in defining the process while discarding the smallest disk (greatest curvature) 
from the original triple.
This transformation will be called a {\bf Descartes ascending move.}
In general, {\bf Descartes move} is as above except the choice of the 
new disk and the one to be discarted is arvbitrary.

\begin{figure}[h]
\centering
\begin{tikzpicture}[scale=1.7]
\clip (-.79, 1.1) rectangle (1.1, -.51);
\draw [white, fill=red!60] (-.79,-2) rectangle (2,2);
\draw [very thick,  fill=white] (0,0) circle (1);
\draw [very thick,  fill=red!60] (8/23,12/23) circle (1/23);
\draw [thick, fill=yellow!20] (0/3, 2/3) circle (1/3);
\draw [thick, fill=yellow!20] (3/6, 4/6) circle (1/6);
\draw [thick, fill=yellow!20] (1/2, 0/2) circle (1/2);

\node at (3/6, 4/6) [scale=.9, color=black] {\sf a};
\node at (0/3, 2/3) [scale=.9, color=black] {\sf b};
\node at (1/2, 0/2) [scale=.9, color=black] {\sf c};
\end{tikzpicture}
\quad\qquad\qquad
\begin{tikzpicture}[scale=3.5, rotate=0]
\clip (-.4,1.05) rectangle (1.01, 0.2);

\draw [very thick, fill=red!60] (15/50, 24/50) circle (1/50);
\draw [very thick,  fill=red!60 ] (3/6, 4/6) circle (1/6);
\draw [thick, fill=yellow!20] (8/23,12/23) circle (1/23);
\draw [thick, fill=yellow!20] (0/3, 2/3) circle (1/3);
\draw [thick, fill=yellow!20] (1/2, 0/2) circle (1/2);

\node at (8/23,12/23) [scale=.9, color=black] {\sf a};
\node at (0/3, 2/3) [scale=.9, color=black] {\sf b};
\node at (1/2, 1/3) [scale=.9, color=black] {\sf c};
\end{tikzpicture}

\caption{Examples of solutions (shown as darker disks) to Descartes' problem 
for disks $a$, $b$, and $c$ .
One of the solutions on the left side has negative curvature.}
\label{fig:twosolutions}
\end{figure}
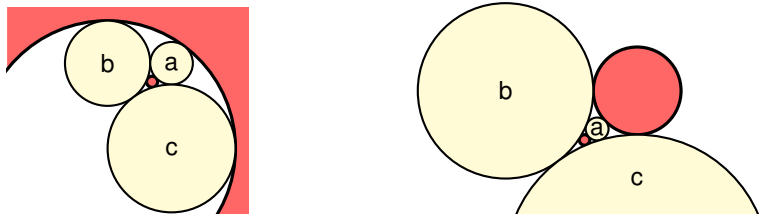


\noindent
%
%

The Apollonian depth is invariant under similarity transformations
of the tricycles, i.e., under rotations, translations and dilations.
In particular:
$$
\depth(a,b,c) = \depth(\lambda a, \lambda b , \lambda c) \qquad \lambda>0
$$
Hence the space of tricycles may be 
parametrized by two numbers,
for instance, by scaling out the tricycles  
by their greatest curvature: 
\begin{equation}
\label{eq:reduce}
(a,b,c) \ \mapsto \ (x,y) \ = \  \left(\tfrac{a}{c},\tfrac{b}{c}\right)
\end{equation}
where we assumed that $c=\max(a,b,c)$. 
The configuration space of the non-negative triples coincides with the 
unit square $I^2$.
%
%
%
%
%
%
%
%
%
%
For economy, we shall use the same symbol for this reduced depth function 
$$
\depth (x,y) \ = \  \depth(1, x, y)
$$
Figure \ref{fig:web} from \cite{jk-web} shows the plot of $\delta$ obtained 
with a computer expriment.
The degree of shade represents the value of the depth.
The intriguing fractal-like pattern 
resembles in parts  that of the Apollonian disk packing,
except the disks are replaced by ellipses.
%
%
\begin{figure}[H]
\centering
\includegraphics[scale=.5]{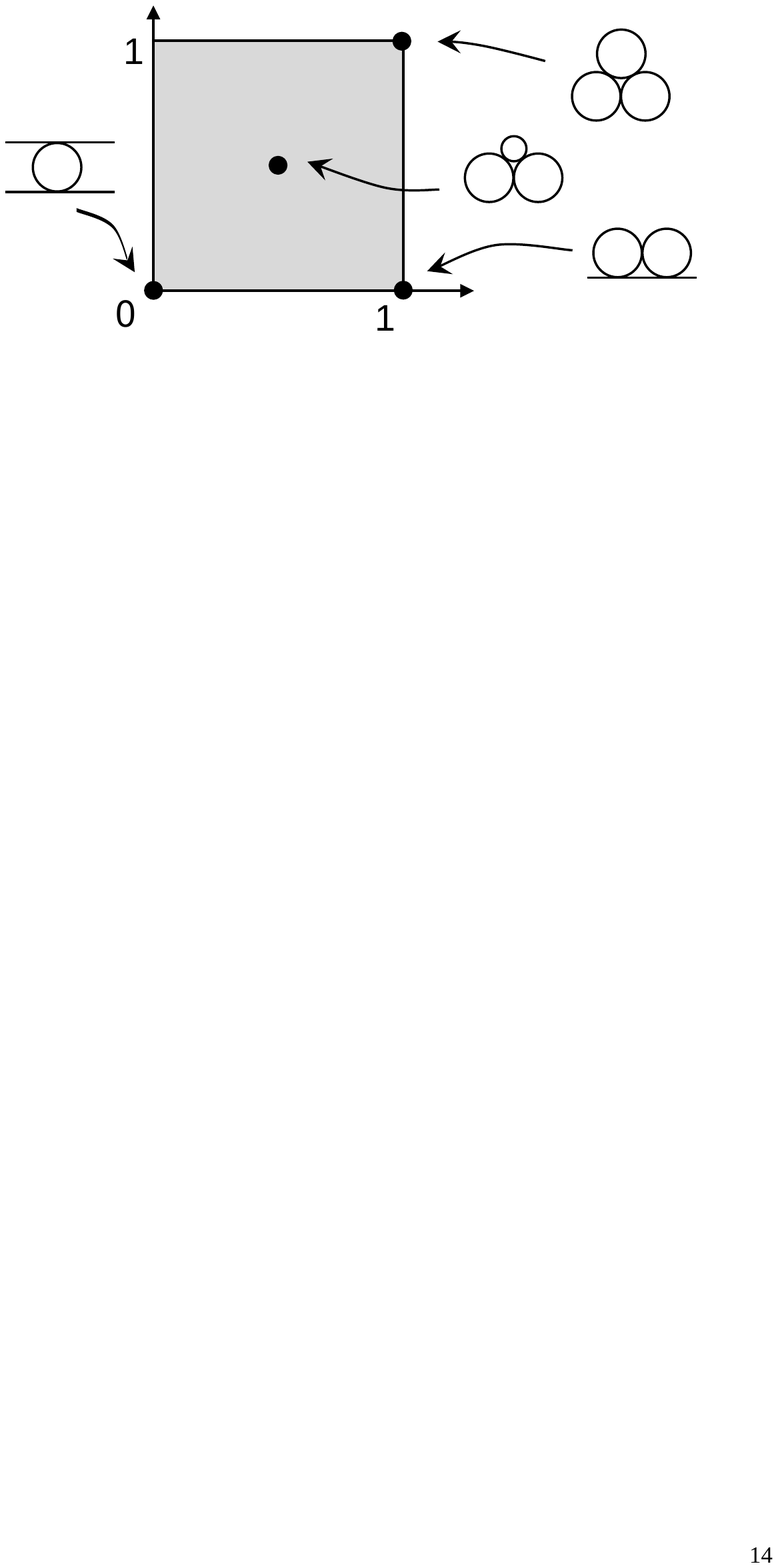}
\quad
\includegraphics[scale=.14]{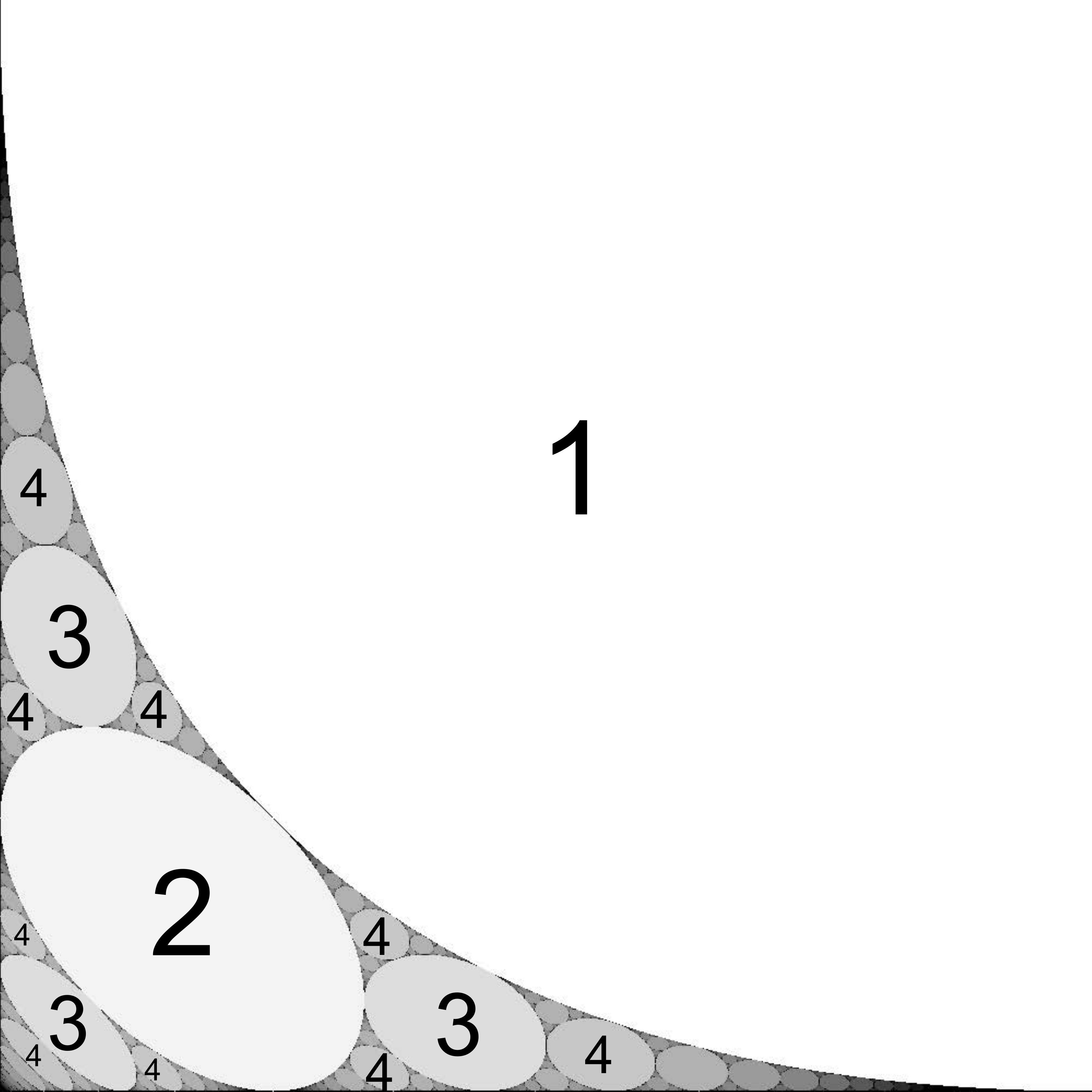}  
\caption{\small Left: Configuration space of tricycles.  Right: Plateaus of constant depth}
\label{fig:web}
\end{figure}
The fractal has a number of interesting properties, 
the most conspicuous being a deformed Stern-Brocot structure in the pattern of the points of tangency.
The ellipses in contact with the $x$ axis are tangent to it at the squares of rational numbers.  
$p^2/m^2$. 
If fractions $\frac{p}{m}$ and $\frac{q}{n}$ 
satisfy $pn-qm = \pm 1$
then the ellipses are mutually tangent and there is an ellipse inscribed between them at
\begin{equation}
\label{eq:x}
x=\frac{(p\!+\!q)^2}{(m\!+\!n)^2}\,.
\end{equation} 

A question arises: 
Is there a way to bring the ellipses to regular circles via some simple transformation? 
``Unsquaring'' the coordinates, suggested by the quadratic form of \eqref{eq:x}
is shown in Figure \ref{fig:bary}, left.
Although interesting artistically, it did not do the trick.
Changing the plot to barycentric coordinates also fails, as shown 
Figure \ref{fig:bary}, right.
%
\begin{figure}[h]
\centering
\includegraphics[scale=.09]{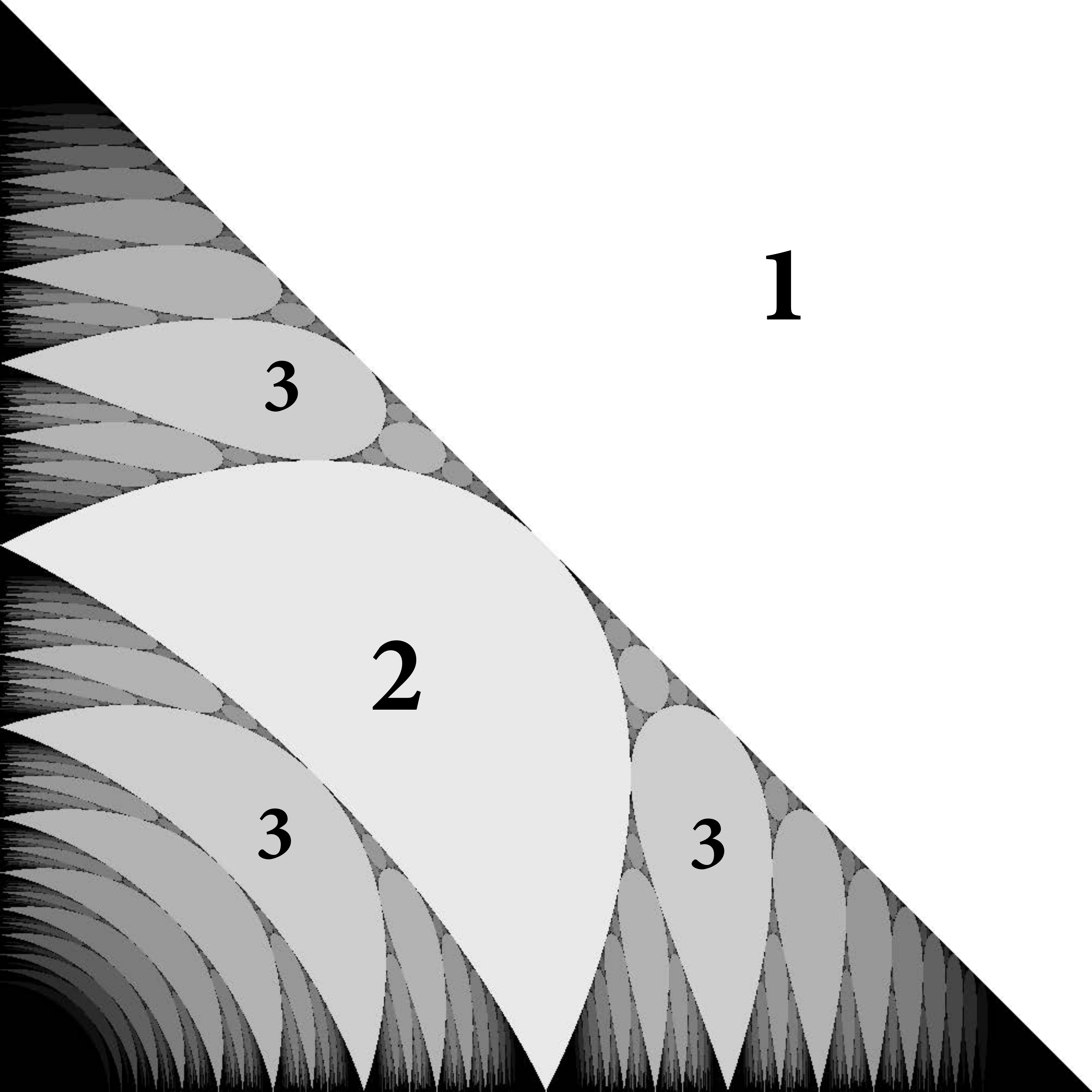}
\qquad\quad
\includegraphics[scale=.14]{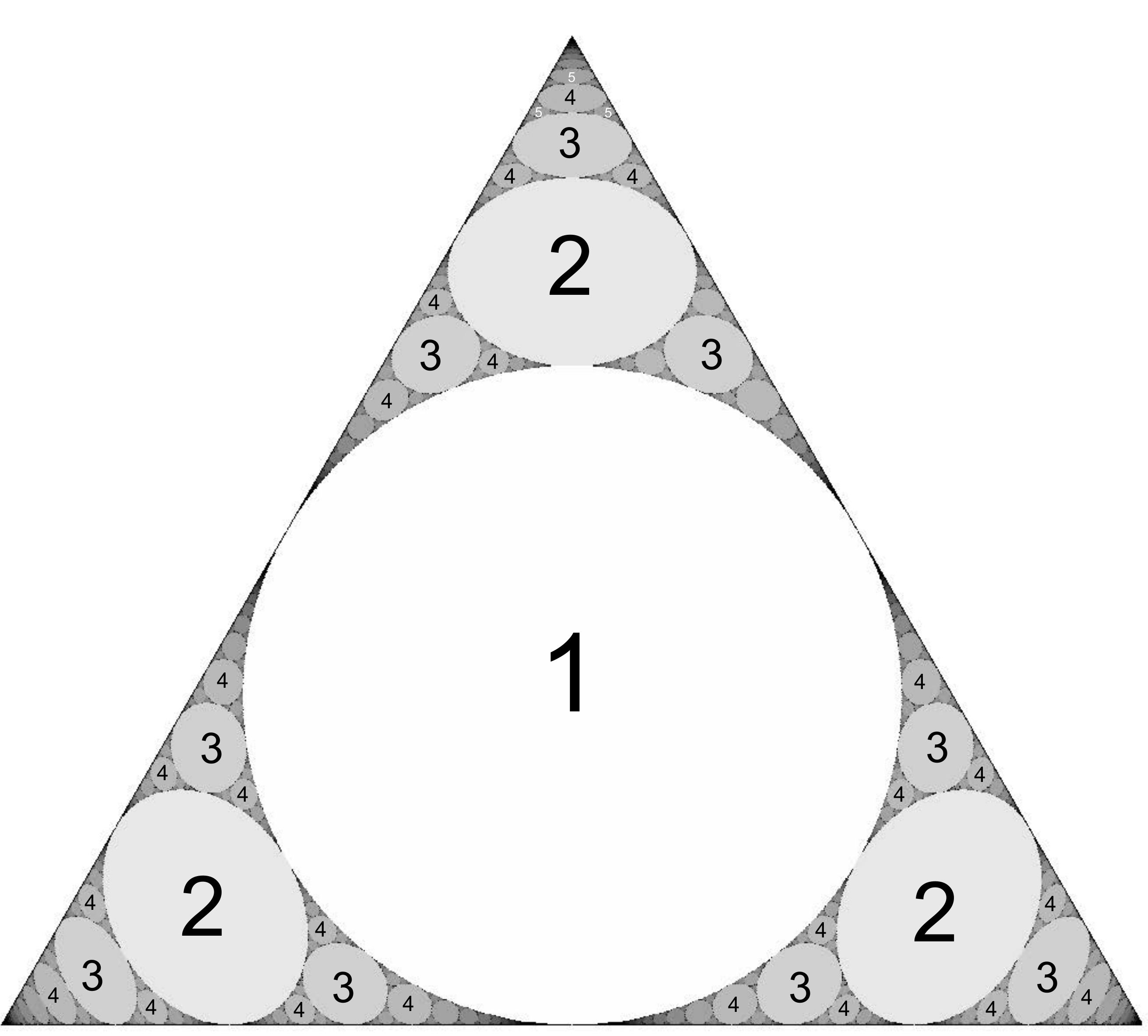} 
\caption{\small Left: quadratic deformation of the web. Right: Depth in barycentric representation.}
\label{fig:bary}
\end{figure}
In the present paper we employ the concept of tangency spinors, which, remarkably,
results with a parametrization of tricycles via Argand plane, 
in which the depth fractal assumes a regular shape, that of the Apollonian Belt.
The symmetries of the plane bring about a ``super-Dedekind'' tessellation,
an extended version of the Dedekind tessellation.
%
%
%
%
%
%
%
%
%
%
%
%
%
%




%
\newpage

\section{Pauli spinors and tricycles}

Recall that for a pair of two disks in contact in an Euclidean plane 
(the complex plane $\mathbb R^2 \cong \mathbb C$)
the {\bf tangency spinor} is defined as a 2-vector or equivalently a complex number
$$
\mathbf u = \begin{bmatrix}
                   x \\  y
                   \end{bmatrix}
\ = \  x+iy
\ = \ \tspin(A,B)
$$
such that 
$$
u^2 =  \frac{w}{r_1r_2}
$$
where $w$ is a complex number representing the vector joining the centers of the disks,
and $A=1/r_1$ and $B=1/r_2$ denote both the circles, and their curvatures.
Spinor is defined up to a sign. 
Recall that the arrows in the figures represent only the order of the disks (not the actual spinor).                
The remarkable properties of spinors are presented in 
\cite{jk-spinor}, and recapitulated in \cite{jk-lattice}
(For the first appearance, see \cite{jk-Clifford}) .

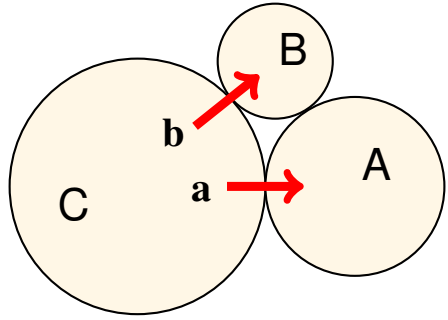
\begin{figure}[h]
\centering
\def\rc{1}
\def\ra{.7}
\def\rb{.45}   
\def\xb{1.077}
\def\yb{.98}
\begin{tikzpicture}[scale=1.7, rotate=0]

\draw [fill=gold!10, thick] (0, 0) circle (\rc);
\draw [fill=gold!10, thick] (\rc+\ra,0) circle (\ra);
\draw [fill=gold!10, thick] (\xb, \yb) circle (\rb);

\node [scale=1.6] at (-\rc/2, -\rc/7) {$\sf C$};
\node [scale=1.6] at (\rc+\ra+.17,   0 +.17) {$\sf A$};
\node [scale=1.6] at (\xb +.14,  \yb+.09) {$\sf B$};
%
\draw [red,->, line width=1.2mm] (\rc-.3, 0)--(\rc+.3, 0);
\node  [scale=1.4]  at (\rc-.5, 0-.04)  {$\mathbf a$};

\draw [red,->, line width=1.2mm] (.45, .47)--(.95, .87);
\node [scale=1.4]  at (.28, .42)  {$\mathbf b$};
\end{tikzpicture}
\caption{Tricycle and two spinors}
\label{fig:2spinors}
\end{figure}

Consider a tricycle with curvatures $A$, $B$, and $C$, 
 and spinors $\mathbf a=\tspin(c,a)$ and $\mathbf b=\tspin(C,B)$, as in Figure \ref{fig:2spinors}.
With the right choice of the signs of the spinors, we can write the following equations:

\begin{equation}
\label{eq:ABC}
\begin{array}{rl}
 (i)    &          \mathbf a \times \mathbf b = C\\
 (ii)  &        \|\mathbf a\|^2 = C+A \\
 (iii)  &        \|\mathbf b\|^2 = C+B \\
 \hspace{-1in}\hbox{and ~~~~~~~~~~~}\\
 (iv)  &       \| \mathbf a\pm \mathbf b\| = C+D_{\pm}\\
  (v)  &        \mathbf a\cdot \mathbf b = K
 \end{array}
 \end{equation}
from which we will need initially the first three.
The $D_{\pm}$ stands for two curvatures of the two disks complementing $(A,B,C)$ 
to the Descartes configuration, and $K$ stands for the curvature of the mid-circle 
that passes through the three points of tangency of $A$, $B$, and $C$.
(It is orthogonal to each of them.)

We may combine the two spinors into a single {\bf tangency Pauli spinor}, the vector 
$$
\xi = \begin{bmatrix}
                   a \\ b
                   \end{bmatrix}
\quad \in \ \mathbb C^2
$$
with $\mathbf a$ and $\mathbf b$ understood as complex numbers
(a vector which is much like the standard Pauli spinor for describing the electron's spin.)
If the tricycle is considered up to a scale and orientation, 
we may map $\xi$ into a single complex number:
\begin{equation}
\label{eq:project}
\xi = \begin{bmatrix}
                   a \\  b
                   \end{bmatrix}
\quad \xrightarrow{~~~} \quad
\begin{bmatrix}
                   1 \\  b/a
                   \end{bmatrix}
\quad \xrightarrow{~~~} \quad
b/a \ =\  z \ =\ x+iy
\end{equation}
Under this map, spinors $\mathbf a$ and $\mathbf b$ may be replaced by 
the following two (we keep the same names $\mathbf a$ and $\mathbf b$ not to multiply symbols used):
\begin{equation}
\label{eq:abclean}
\mathbf a = \begin{bmatrix}
                   1 \\   0
\end{bmatrix}\,,
\qquad
\mathbf b = \begin{bmatrix}
                   x \\  y
\end{bmatrix}
 \end{equation}
Using the associations \eqref{eq:ABC}, we get 
\begin{equation}
\label{eq:ABCxy}
\boxed{\qquad
\begin{array}{rl}
 (i)    &       C= y  \\
 (ii)  &        A = 1-y \\
 (iii)  &       B =  x^2 + y^2 - y \\
 \end{array}
 \qquad}
 \end{equation}
  
 \begin{proposition}
 The set of tricycles considered up to scaling and rotation may be parametrized
 by $z=x+iy$ via Eq \eqref{eq:ABCxy}, with the property  
\begin{equation}
\label{eq:dz}
\mathbf \delta(z) = \delta(A,B,C)  \,.
 \end{equation}
  \end{proposition}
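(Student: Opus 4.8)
The plan is to verify that the correspondence $z \mapsto (A,B,C)$ given by \eqref{eq:ABCxy} is well-defined, surjective onto the relevant configuration space, and depth-preserving. First I would confirm that the map is legitimate: starting from a tricycle $(A,B,C)$ with tangency spinors $\mathbf a = \tspin(c,a)$ and $\mathbf b = \tspin(C,B)$, the projectivization \eqref{eq:project} kills exactly the scaling-and-rotation ambiguity (a complex rescaling of $\xi$ corresponds to a rotation and dilation of the underlying tricycle, together with the sign ambiguity inherent in spinors), so $z = b/a$ is a complete invariant of the similarity class. Conversely, given $z = x+iy$, the normalized spinors \eqref{eq:abclean} reconstruct a tricycle via \eqref{eq:ABC}(i)--(iii), which is precisely the content of the boxed formulas \eqref{eq:ABCxy}: $C = \mathbf a \times \mathbf b = y$, $A = \|\mathbf a\|^2 - C = 1 - y$, and $B = \|\mathbf b\|^2 - C = x^2+y^2-y$. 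One should note which region of the $z$-plane yields genuine (non-negative, mutually tangent) tricycles, i.e. where $A, B, C > 0$, and confirm that on the complement the depth is $0$ on both sides, so the identity \eqref{eq:dz} holds trivially there.

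Next I would establish \eqref{eq:dz} on the interior. Since $\delta$ depends on $(A,B,C)$ only through their similarity class by the dilation-invariance recalled just before \eqref{eq:reduce}, and since $z$ is a faithful coordinate on that class, the function $\delta(z) := \delta(1, x, y)$ inherited via the reduction \eqref{eq:reduce} agrees with $\delta(A,B,C)$ up to checking that the particular normalization built into \eqref{eq:ABCxy} is consistent with the normalization $c \mapsto 1$ used in \eqref{eq:reduce}. Concretely, I would show that $(A,B,C)$ from \eqref{eq:ABCxy}, when rescaled so that its maximum is $1$, lands on the same point as the earlier parametrization $(a/c, b/c)$ — or, more cleanly, simply observe that $\delta$ is defined on all positive triples and is scale-invariant, so it does not matter which normalization one uses: $\delta(A,B,C)$ is already unambiguous, and the Proposition is the statement that this value, expressed through $z$, is what we call $\delta(z)$.

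The one substantive point — and the place I expect the main obstacle — is the sign/branch bookkeeping in ``with the right choice of the signs of the spinors.'' Equations \eqref{eq:ABC} hold only for a compatible choice of the two spinor signs, and under the projectivization \eqref{eq:project} one must check that the orbit of admissible $z$ is exactly the closure of a fundamental domain (here, the unit square or its image), with no spurious double-covering or missing configurations. I would handle this by tracking the $\mathbb{Z}/2 \times \mathbb{Z}/2$ sign group on $(\mathbf a, \mathbf b)$ through the map $\xi \mapsto b/a$: flipping both signs does nothing to $z$, flipping one sign sends $z \mapsto -z$, and one then argues that restricting to the upper half-plane (forced by $C = y > 0$) together with the tricycle-positivity conditions pins down a unique representative. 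Once this is settled, \eqref{eq:dz} is immediate because the Descartes ascending move \eqref{eq:abc} is itself a function of $(A,B,C)$ alone, hence of $z$ alone, and the process defining $\delta$ is unchanged.

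Finally I would remark that formulas \eqref{eq:ABC}(iv)--(v) are not needed for this Proposition but serve as a consistency check: substituting \eqref{eq:abclean} into $\|\mathbf a \pm \mathbf b\|^2 = C + D_\pm$ should reproduce the two Descartes solutions \eqref{eq:Descartespm} for the triple \eqref{eq:ABCxy}, which is a one-line verification confirming that the ascending move, and therefore the whole depth process, transports correctly to the $z$-plane.
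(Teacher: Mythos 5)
Your proposal is correct and takes essentially the same route as the paper, which in fact offers no separate proof of this Proposition beyond the preceding derivation of \eqref{eq:ABCxy} from the spinor identities \eqref{eq:ABC} and the normalization \eqref{eq:abclean}, combined with the scale-invariance of $\delta$ recalled before \eqref{eq:reduce}. Your write-up simply makes that implicit argument explicit, and your added care about the spinor sign ambiguity (pinned down by $C=y>0$) and the consistency check via \eqref{eq:ABC}(iv) are sensible refinements rather than a different method.
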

 
Now,  we may execute a code for calculating the depth function as a function $\delta(x,y)$.
The result is startling and is presented for positive $x$ and $y$ in Figure \ref{fig:czarny} below.
The color coding is from black to blue to read as the value of depth grows.
The black regions correspond to zero depth.
The bottom line of the colorful belt coincides with the $x$-axis.
The left side of the figure coincides with the $y$-axis.

\begin{figure}[H]
\centering
\includegraphics[scale=.5]{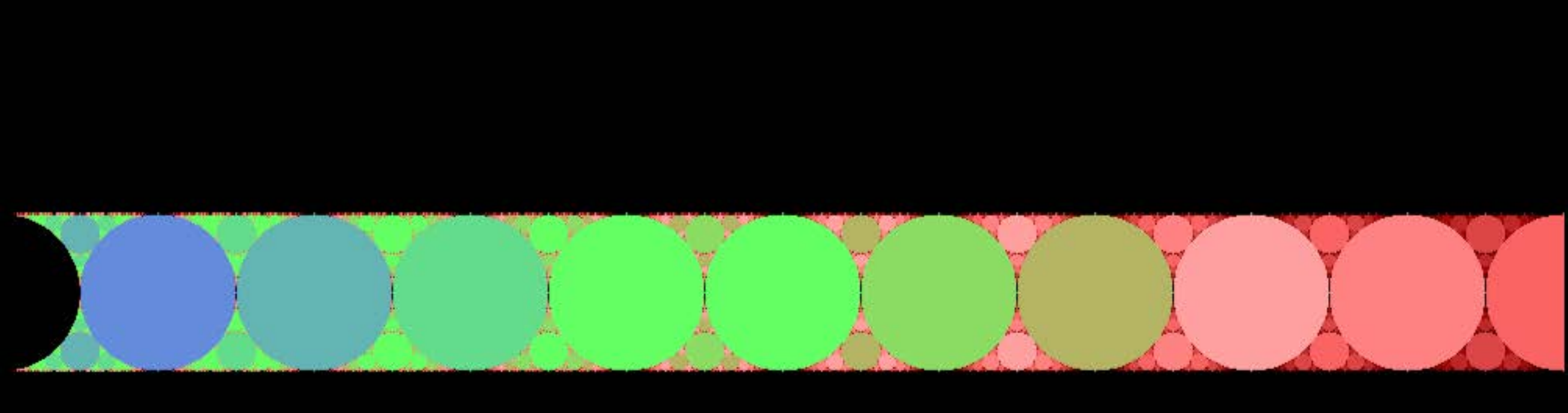}
\caption{\small The result of programming: the regions of different depth are disks.}
\label{fig:czarny}
\end{figure}

 We shall call the complex plane in this context the {\bf projective spinor space}.
 As presented above, it is the result of projectivization of the tangency Pauli spinors:
$$
\mathbb C\times \mathbb C  \  \xrightarrow{~~\oplus ~~}  \  
\mathbb C^2  \  \xrightarrow{~~\pi ~~}  \  
\mathbb C  {\rm P}^1  \  \xrightarrow{~~\cong~~}  \  
\dot{\mathbb C} \;\equiv\; \mathbb C\cup \{\infty\} 
$$

\begin{figure}[H]
\centering
\includegraphics[scale=.35]{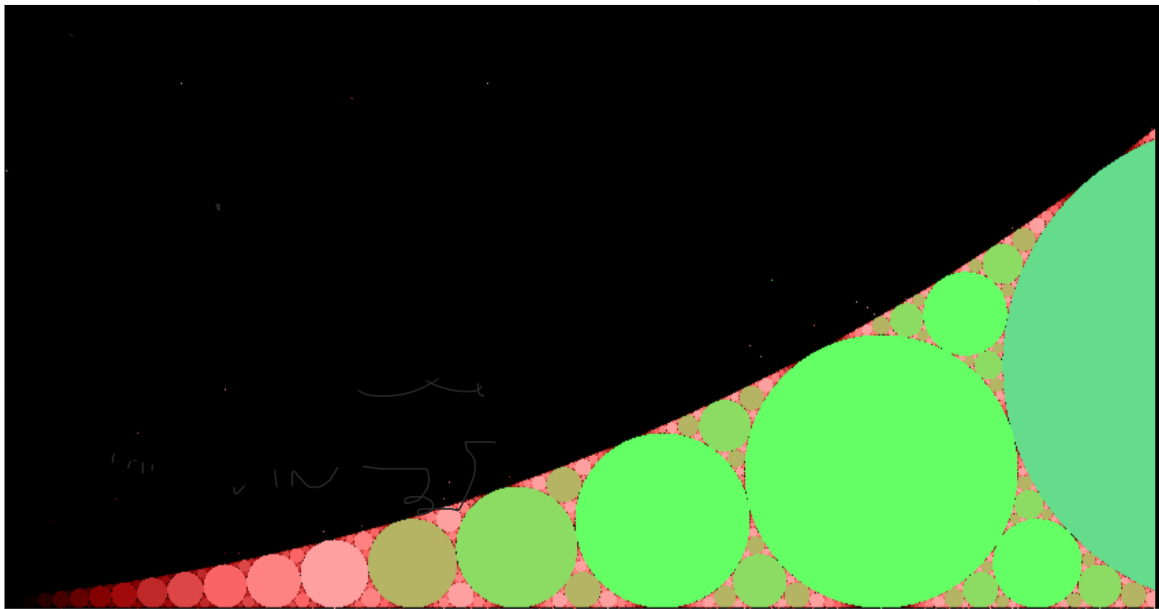}
\caption{\small Lower corner at the central disk of the depth plot. Lower left is point (0,0).}
\label{fig:corner}
\end{figure}

 Let us start with a few basic observations:
 
 \begin{itemize}
 \item
 Visually, the resulting fractal is similar to that of the Apollonian Belt (see Appendix B).
 In that sense we get a surprisingly regular pattern, unlike that of Figure \ref{fig:web} of \cite{jk-web}.
 \\[-19pt]
\item
Regions of arbitrarily  high values of the depth function exist  in contact with the regions 
of small values.  Figure \ref{fig:corner} shows a close-up of the corner region near the point (0,0).  

\item
The left-right symmetry is due to two mirror versions of regular tricycle (chiral versions).
\\[-19pt]
 \item
 The 0-depth regions correspond to the cases when one of the disks in a tricycle 
 is of negative curvature, and may be derived from \eqref{eq:ABCxy}:
  \\[-19pt]
 \begin{itemize}
 \item[$\bullet$]
 \ $C<0$ \ for \ $y<0$\\[-19pt]
 \item[$\bullet$]
 \ $A<0$ \ for \  $y>1$\\[-19pt]
 \item[$\bullet$]
 \ $B<0$ \ for \ $x^2+(y-1/2)^2<1/4$\\[-19pt]
 \end{itemize}

\item
The pattern of the depth values in the Apollonian belt follows the order of completing the 
initial three disks made by the 0-value regions, the disks bounded by circles
$y=0$, $y=1$ and $x^2+(y-1/2)^2 = 1/4$.
 \end{itemize}

 There is a number of conspicuous symmetries that we will explore next.

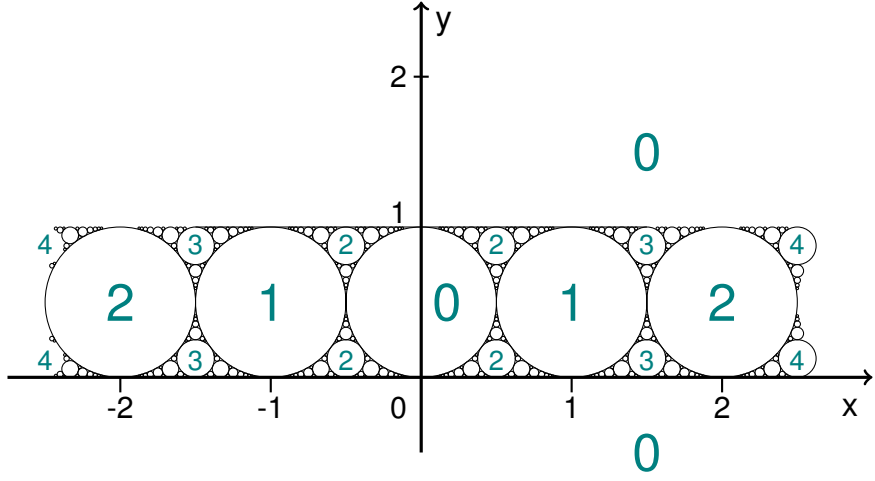
\begin{figure}[H]
\centering

\begin{tikzpicture}[scale=1, shift={(0,2cm)}]  
\draw [->, very thick ]  (-5.5,-1) -- (6,-1);
\node at (5.7,-1.4) [scale=1.2, color=black] {\sf x};
\draw (-3,1) -- (3,1);
\draw [->, very thick] (0,-2) -- (0,4);  
\node at (.3,3.7) [scale=1.2, color=black] {\sf y};

\draw [thick] (2,-1)--(2,-1.2); 
     \node at (2,-1.4) [scale=1.1, color=black] {\sf 1};
\draw [thick] (4,-1)--(4,-1.2); 
     \node at (4,-1.4) [scale=1.1, color=black] {\sf 2};
\draw [thick] (-2,-1)--(-2,-1.2); 
     \node at (-2,-1.4) [scale=1.1, color=black] {\sf -1};
\draw [thick] (-4,-1)--(-4,-1.2); 
     \node at (-4,-1.4) [scale=1.1, color=black] {\sf -2};

\draw [thick] (-.1, 3)--(.1,3); 
     \node at (-.3,3) [scale=1.1, color=black] {\sf 2};
     \node at (-.3,1.2) [scale=1.1, color=black] {\sf 1};
     \node at (-.3,-1.4) [scale=1.1, color=black] {\sf 0};

\draw (0,0) circle (1)   (2,0) circle (1)  (-2,0) circle (1)  (4,0) circle (1)  (-4,0) circle (1);

\foreach \b/\a/\c in {
3/4/4,  
5/12/12,  
7/24/24, 
9/40/40,  
11/60/60
%
}
\draw (\a/\c,\b/\c) circle (1/\c)     (\a/\c,-\b/\c) circle (1/\c)
%
(\a/\c+2,\b/\c) circle (1/\c)    (\a/\c+2,-\b/\c) circle (1/\c)  
(\a/\c+4,\b/\c) circle (1/\c)    (\a/\c+4,-\b/\c) circle (1/\c)  
(\a/\c-2,\b/\c) circle (1/\c)    (\a/\c-2,-\b/\c) circle (1/\c)  
(\a/\c-4,\b/\c) circle (1/\c)    (\a/\c-4,-\b/\c) circle (1/\c)  
         ;

\foreach \b/\a/\c in {
8/    6/   9, 	
15/   8/   16 , 	
24/  20/  25, 	
24/  10/  25, 	
21/  20/  28,
16/	30/	33,    
35/	12/	36,
48/	42/	49,
48/	28/	49,
48/	14/	49,
45/	28/	52,
40/	42/	57,
33/	56/	64,
63/	48/	64,
63/	16/	64
%
}
\draw (\a/\c, \b/\c) circle (1/\c)          (-\a/\c, \b/\c) circle (1/\c)
          (\a/\c,-\b/\c) circle (1/\c)         (-\a/\c,-\b/\c) circle (1/\c)
          (\a/\c+2, \b/\c) circle (1/\c)          (-\a/\c+2, \b/\c) circle (1/\c)
          (\a/\c+2,-\b/\c) circle (1/\c)         (-\a/\c+2,-\b/\c) circle (1/\c)
          (\a/\c+4, \b/\c) circle (1/\c)          (-\a/\c+4, \b/\c) circle (1/\c)
          (\a/\c+4,-\b/\c) circle (1/\c)         (-\a/\c+4,-\b/\c) circle (1/\c)
          (\a/\c-2, \b/\c) circle (1/\c)          (-\a/\c-2, \b/\c) circle (1/\c)
          (\a/\c-2,-\b/\c) circle (1/\c)         (-\a/\c-2,-\b/\c) circle (1/\c)
          (\a/\c-4, \b/\c) circle (1/\c)          (-\a/\c-4, \b/\c) circle (1/\c)
          (\a/\c-4,-\b/\c) circle (1/\c)         (-\a/\c-4,-\b/\c) circle (1/\c)
;
\node at (.34,0) [scale=1.9, color=teal] {\sf 0};
\node at (-2,0) [scale=1.9, color=teal] {\sf 1};
\node at (2,0) [scale=1.9, color=teal] {\sf 1};
\node at (-4,0) [scale=1.9, color=teal] {\sf 2};
\node at (4,0) [scale=1.9, color=teal] {\sf 2};

\node at (3,2) [scale=1.9, color=teal] {\sf 0};
\node at (3,-2) [scale=1.9, color=teal] {\sf 0};

\node at (1,0.77) [scale=1, color=teal] {\sf 2};
\node at (-1,0.77) [scale=1, color=teal] {\sf 2};
\node at (1,-0.77) [scale=1, color=teal] {\sf 2};
\node at (-1,-0.77) [scale=1, color=teal] {\sf 2};

\node at (3,0.77) [scale=1, color=teal] {\sf 3};
\node at (-3,0.77) [scale=1, color=teal] {\sf 3};
\node at (3,-0.77) [scale=1, color=teal] {\sf 3};
\node at (-3,-0.77) [scale=1, color=teal] {\sf 3};

\node at (5,0.77) [scale=1, color=teal] {\sf 4};
\node at (-5,0.77) [scale=1, color=teal] {\sf 4};
\node at (5,-0.77) [scale=1, color=teal] {\sf 4};
\node at (-5,-0.77) [scale=1, color=teal] {\sf 4};

\end{tikzpicture}
\caption{the depth structure of the experimental fractal}
\label{fig:Apollo}
\end{figure}

\section{Finite symmetries}

The symmetries that transfer the Apollonian belt to itself and permute 
the  regions of depth 0  are easy to spot, see Figure \ref{fig:symmetries}.
They are:\\[-7pt]

(1) \  $F:$  Reflection through the horizontal line $y=1/2$.

(2) \  $S:$ Inversion through the unit circle centered at origin, \ $x^2+y^2=1$.

(3) \  $R:$ The Inversion through the unit  circle centered at (0,1), \ $x^2+(y\!-\!1)^2=1$.

~

Here is a simple observation:

\begin{proposition}
\label{thm:FSR}
The above three transformations permute the values of curvatures among the three circles of the tricycles.
$$
F: C\leftrightarrows A
\qquad
S: A\leftrightarrows B
\qquad
R: B\leftrightarrows C
$$
and preserve the patter of the depth function in Fig. \ref{fig:czarny}.
\end{proposition}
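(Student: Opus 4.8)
The plan is to verify the statement by a direct substitution into the parametrization \eqref{eq:ABCxy}, and then to deduce invariance of the whole depth pattern from the permutation-symmetry and scale-invariance of $\delta$.

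First I would record the three maps in the coordinates $(x,y)$:
\[
F(x,y) = (x,\,1-y), \qquad
S(x,y) = \frac{(x,y)}{x^2+y^2}, \qquad
R(x,y) = \Bigl(\frac{x}{x^2+(y-1)^2},\ 1+\frac{y-1}{x^2+(y-1)^2}\Bigr).
\]
Then, for each map, I substitute into $(A,B,C)=(1-y,\,x^2+y^2-y,\,y)$ and simplify. For $F$ this is an immediate identity, giving exactly $(A',B',C')=(C,B,A)$. For $S$, writing $\rho=x^2+y^2$, the image triple works out to $\rho^{-1}(B,A,C)$; the one computation that needs a line is the new $B$-value, $(x/\rho)^2+(y/\rho)^2-y/\rho = (1-y)/\rho = A/\rho$. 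For $R$, writing $\sigma=x^2+(y-1)^2$, the image triple is $\sigma^{-1}(A,C,B)$; here the new $B$-value collapses to $\sigma^{-1}y$ once one uses the identity $x^2+(y-1)^2+(y-1)=x^2+y^2-y=B$.

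The point to flag is that for the two inversions the curvature triple is transposed only up to the positive common factor $\rho^{-1}$ (resp.\ $\sigma^{-1}$), not literally --- as it must be, since an inversion is not a similarity. This is harmless: the step \eqref{eq:abc} of the defining process acts on whichever entry of the triple is largest, so $\delta$ depends only on the multiset $\{a,b,c\}$ and is therefore invariant under the full symmetric group permuting its three arguments; and by the similarity-invariance already noted, $\delta(\lambda a,\lambda b,\lambda c)=\delta(a,b,c)$ for $\lambda>0$, it is scale-invariant. Hence $\delta(Fz)=\delta(C,B,A)=\delta(z)$, $\delta(Sz)=\delta(\rho^{-1}B,\rho^{-1}A,\rho^{-1}C)=\delta(z)$, and $\delta(Rz)=\delta(z)$. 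Moreover, since each map permutes the curvature triple up to a positive factor, it carries the zero-set of $\delta$ --- and hence its positive part, the belt $\{\,0<y<1,\ x^2+(y-\tfrac12)^2>\tfrac14\,\}$ --- onto itself; so $F,S,R$ are genuine symmetries of the whole picture of Figure \ref{fig:czarny}.

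I expect the only real obstacle to be bookkeeping in the $R$-computation, since the center of inversion is $(0,1)$ rather than the origin and the algebra for the new $B$-value is the messiest of the three; recording the intermediate identity above keeps it short. As a remark I would add the conceptual reading: $F,S,R$ are the realizations on the projective spinor space $\dot{\mathbb C}$ of the three transpositions of the disks of the tricycle --- reversing the roles of a pair of disks relabels the tangency spinors $\mathbf a=\tspin(C,A)$, $\mathbf b=\tspin(C,B)$, hence acts on $z=\mathbf b/\mathbf a$ --- from which the permutation of curvatures through \eqref{eq:ABC} is automatic; but the direct verification is shorter and self-contained.
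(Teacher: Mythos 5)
Your proof is correct and follows essentially the same route as the paper: direct substitution of each map into the parametrization \eqref{eq:ABCxy}, showing that the curvature triple is transposed up to a common positive factor, and then invoking permutation- and scale-invariance of $\delta$. Your computations are in fact cleaner than the paper's own (which contains sign/label slips in the $S$ and $R$ calculations), and your explicit remark that the inversions only permute the triple up to the factors $\rho^{-1}$, $\sigma^{-1}$ is exactly the point the paper's proof relies on.
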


\begin{proof}
The coordinate description $(x,y)\to(x',y')$above transformations are:
\begin{equation}
\label{eq:FSR}
 F: \  \left\{\begin{array}{l}
         x' = x\\
         y'=1-y
        \end{array}\right.
\qquad        
S: \  \left\{\begin{array}{l}
         x' = \dfrac{x}{x^2+y^2}\\
         y'=\dfrac{y}{x^2+y^2}
        \end{array}\right.   
\qquad        
R: \  \left\{\begin{array}{l}
         x' = \dfrac{x}{x^2+(y-1)^2}\\
         y'=\dfrac{x^2+y^2-y}{x^2+(y-1)^2}
        \end{array}\right.                     
\end{equation}
As to the reflection $F$, the claim is obvious.  For inversion $S$, calculate:
$$
\begin{array}{clll}
C\ \mapsto \  C' &= y' = \dfrac{x^2+y^2-y}{x^2+(y-1)^2} = \dfrac{C}{x^2+(y-1)^2} 
&  \sim C
\\[12pt]
A\ \mapsto \ A'  &= 1-y' =  1- \dfrac{x^2+y^2-y}{x^2+(y-1)^2}  = \dfrac{B}{x^2+(y-1)^2} 
&\sim B
\\[12pt]
B \ \mapsto \  B' &= x'^2 + y'^2 - y' = ... = \dfrac{C}{x^2+(y-1)^2}
&\sim C
\end{array}
$$
Thus all prove to be re-scaled by the same factor.
For the third transformation, $R$, similar calculations show that 
$$
C' =  \dfrac{C}{x^2+(y-1)^2}, 
\qquad  
A' =  \dfrac{B}{x^2+(y-1)^2},
\qquad
B' = \dfrac{C}{x^2+(y-1)^2}  
$$
Thus the transformations $F$, $S$, $R$ preserve the mutual ratios of the the curvatures, 
and consequently preserve the depth, hence the the conclusion of the invariance of the pattern.
\end{proof}

Yet another apparent feature of the image in Figure \ref{fig:Apollo} is the left-right mirror symmetry:

~

\begin{figure}
\centering

\begin{tikzpicture}[scale=.7, shift={(0,2cm)}]  
\draw [->, very thick ]  (-5.5,-1) -- (6,-1);
\node at (5.7,-1.4) [scale=1.2, color=black] {\sf x};
\draw (-3,1) -- (3,1);
\draw [->, very thick] (0,-3.3) -- (0,4.5);  
\node at (-.4,4.7) [scale=1.2, color=black] {\sf y};

\draw [thick] (2,-1)--(2,-1.2); 
     \node at (2.3,-1.44) [scale=1.1, color=black] {\sf 1};
\draw [thick] (4,-1)--(4,-1.2); 
     \node at (4,-1.44) [scale=1.1, color=black] {\sf 2};
\draw [thick] (-2,-1)--(-2,-1.2); 
     \node at (-2.3,-1.44) [scale=1.1, color=black] {\sf -1};
\draw [thick] (-4,-1)--(-4,-1.2); 
     \node at (-4,-1.44) [scale=1.1, color=black] {\sf -2};
\draw [thick] (-.1, 3)--(.1,3); 
     \node at (-.3,2.5) [scale=1.1, color=black] {\sf 2};
     \node at (-.3,1.4) [scale=1.1, color=black] {\sf 1};
     \node at (-.3,-1.4) [scale=1.1, color=black] {\sf 0};
\draw (0,0) circle (1)   (2,0) circle (1)  (-2,0) circle (1)  (4,0) circle (1)  (-4,0) circle (1);
\foreach \b/\a/\c in {
3/4/4,  
5/12/12,  
7/24/24, 
9/40/40,  
11/60/60
%
}
\draw (\a/\c,\b/\c) circle (1/\c)     (\a/\c,-\b/\c) circle (1/\c)
%
(\a/\c+2,\b/\c) circle (1/\c)    (\a/\c+2,-\b/\c) circle (1/\c)  
(\a/\c+4,\b/\c) circle (1/\c)    (\a/\c+4,-\b/\c) circle (1/\c)  
(\a/\c-2,\b/\c) circle (1/\c)    (\a/\c-2,-\b/\c) circle (1/\c)  
(\a/\c-4,\b/\c) circle (1/\c)    (\a/\c-4,-\b/\c) circle (1/\c)  
         ;

\foreach \b/\a/\c in {
8/    6/   9, 	
15/   8/   16 , 	
24/  20/  25, 	
24/  10/  25, 	
21/  20/  28,
16/	30/	33,    
35/	12/	36,
48/	42/	49,
48/	28/	49,
48/	14/	49,
45/	28/	52,
40/	42/	57,
33/	56/	64,
63/	48/	64,
63/	16/	64
%
}
\draw (\a/\c, \b/\c) circle (1/\c)          (-\a/\c, \b/\c) circle (1/\c)
          (\a/\c,-\b/\c) circle (1/\c)         (-\a/\c,-\b/\c) circle (1/\c)
          (\a/\c+2, \b/\c) circle (1/\c)          (-\a/\c+2, \b/\c) circle (1/\c)
          (\a/\c+2,-\b/\c) circle (1/\c)         (-\a/\c+2,-\b/\c) circle (1/\c)
          (\a/\c+4, \b/\c) circle (1/\c)          (-\a/\c+4, \b/\c) circle (1/\c)
          (\a/\c+4,-\b/\c) circle (1/\c)         (-\a/\c+4,-\b/\c) circle (1/\c)
          (\a/\c-2, \b/\c) circle (1/\c)          (-\a/\c-2, \b/\c) circle (1/\c)
          (\a/\c-2,-\b/\c) circle (1/\c)         (-\a/\c-2,-\b/\c) circle (1/\c)
          (\a/\c-4, \b/\c) circle (1/\c)          (-\a/\c-4, \b/\c) circle (1/\c)
          (\a/\c-4,-\b/\c) circle (1/\c)         (-\a/\c-4,-\b/\c) circle (1/\c)
;
\node at (-.34,0) [scale=1.9, color=teal] {\sf 0};
\node at (-2,0) [scale=1.9, color=teal] {\sf 1};
\node at (2,0) [scale=1.9, color=teal] {\sf 1};
\node at (-4,0) [scale=1.9, color=teal] {\sf 2};
\node at (4,0) [scale=1.9, color=teal] {\sf 2};

\node at (-3,2.4) [scale=1.9, color=teal] {\sf 0};
\node at (-3,-2.4) [scale=1.9, color=teal] {\sf 0};

\node at (1,0.77) [scale=1, color=teal] {\sf 2};
\node at (-1,0.77) [scale=1, color=teal] {\sf 2};
\node at (1,-0.77) [scale=1, color=teal] {\sf 2};
\node at (-1,-0.77) [scale=1, color=teal] {\sf 2};

\node at (3,0.77) [scale=1, color=teal] {\sf 3};
\node at (-3,0.77) [scale=1, color=teal] {\sf 3};
\node at (3,-0.77) [scale=1, color=teal] {\sf 3};
\node at (-3,-0.77) [scale=1, color=teal] {\sf 3};

\draw [very thick, color=red] (0,-1) circle (2);
\draw [very thick, color=red] (0,1) circle (2);
\draw [very thick, color=red] (-5.5,0) -- (6.5,0);

\draw [<->, line width=.8mm, color=blue] (3/4,2) --(3/4+1, 2+1);
\node at (2.3,3.2) [scale=1.2, color=blue] {\sf R};

\draw [<->, line width=.8mm, color=blue] (3/4,0-2) --(3/4+1, 0-2-1);
\node at (2.3,-3.2) [scale=1.2, color=blue] {\sf S};

\draw [<->, line width=.8mm, color=blue] (6,  -.6) --(6, .6);
\node at (6.2,1) [scale=1.2, color=blue] {\sf F};

\draw [<->, line width=.8mm, color=blue] (-.6,  3.5) --(.6, 3.5);
\node at (.7,4) [scale=1.2, color=blue] {\sf M};

\end{tikzpicture}
\caption{Symmetries of the Apollonian depth fractal}
\label{fig:symmetries}
\end{figure}
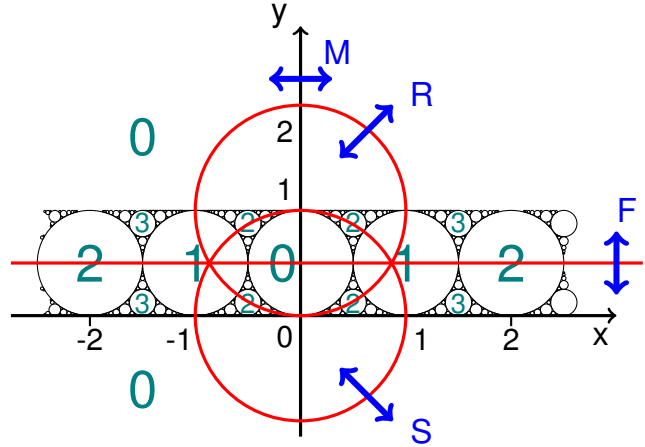

(4) \ $H$:  Reflection through the vertical axis $x=0$:
\begin{equation}
\label{eq:H}
 H: \  \left\{\begin{array}{l}
         x' = -x\\
         y'=y
        \end{array}\right.
\end{equation}
It does not affect the Equations \eqref{eq:ABCxy}, but  geometrically it represents the mirror reflection 
of the tricycles.

Denote the two finite groups generated by these transformations 
(with and without symmetry $H$): 
\begin{equation}
\label{eq:groupsfinite}
\Theta_{\sf o} = \gen\{S,F\}
\qquad\qquad
\Theta = \gen\{S, F,H \}
\end{equation}
Note that both groups contain also inversion $R=SFS=FSF\in \Theta_{\sf o}$.  
The defining identities are:
$$
H^2= S^2 = F^2=R^2 = (SF)^3=    (FR)^3=(RS)^3=\id \,.
$$
Moreover $H$ commutes with all other elements.
Group $\Theta$ has 12 elements and splits $\dot{\mathbb C}$ into 12 regions, each
of which nay serve as the fundamental domain for group $\Theta$,
i.e. a region that contains exactly one element of every orbit of the action defined by the group.
Figure \ref{fig:12} show the situation with a particular choice of the fundamental domain,
marked as $Q$. The fundamental region for group $\Theta_0$ may be chosen as $Q\cup HQ$.
\\

The essence of Proposition \ref{thm:FSR}  may be now reformulated;

\begin{proposition}
\label{thm:theta}
The groups of symmetry \eqref{eq:groupsfinite} preserve the depth function:
$$
\forall g\in \Theta \qquad  \delta(gz) \ = \ \delta(z)
$$
This results in characterization of the quotients of the action:
$$
\begin{array}{cl}
\mathbb Z/\Theta_{\sf o} &= \
            \left\{ \, {\hbox{\rm all tricycles up to similarity}
            \atop 
            \hbox{\rm  excluding reflections \hfill}}\,\right\}
            \ \cong \  Q\cup HQ
\\[12pt]
\mathbb Z/\Theta \  &= \
            \left\{ \, {\hbox{\rm all tricycles up to similarity}
            \atop 
            \hbox{\rm including reflections \hfill}}\,\right\}
            \ \cong \  Q
\end{array}
$$
\end{proposition}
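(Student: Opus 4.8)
The plan is to split the statement into two halves: first that $\delta$ is constant on $\Theta$-orbits (hence a fortiori on $\Theta_{\sf o}$-orbits), and then that the two quotient spaces are the claimed moduli spaces of tricycles, with the concrete models $Q\cup HQ$ and $Q$ supplied by the fundamental-domain property recorded just before the statement.

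For the invariance it is enough to check the generators, since the depth-preserving self-maps of $\dot{\mathbb C}$ form a group; so I would verify $F$, $S$, $H$ (equivalently $F$, $S$, $R$, $H$, as $R=SFS$). Proposition~\ref{thm:FSR}, together with the computations in its proof, shows that each of $F$, $S$, $R$ sends the curvature triple $(A,B,C)$ attached to $z$ to a permutation of that triple with all three entries multiplied by one common positive factor. Since the step \eqref{eq:abc} is a symmetric function of $(a,b,c)$ and $\delta$ is unchanged when all three arguments are scaled by the same positive number, each of $F$, $S$, $R$ preserves $\delta$. For $H$ the formulas \eqref{eq:ABCxy} are literally invariant under $x\mapsto -x$, so $(A,B,C)$, and with it $\delta$, is fixed. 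Hence $\delta(gz)=\delta(z)$ for every $g\in\Theta$.

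Next I would identify the orbit spaces. By the parametrization established in Section~2, a point $z=x+iy\in\dot{\mathbb C}$ is precisely the data of a tricycle together with an ordering of its three disks, taken up to a rotation and a dilation --- an orientation-preserving similarity --- the curvatures being read off from \eqref{eq:ABCxy} and the placements from the spinors $\mathbf a=\tspin(C,A)$, $\mathbf b=\tspin(C,B)$ of \eqref{eq:abclean}. The only remaining freedom in passing to the \emph{unordered} tricycle is the relabeling action of $S_3$ on the three disks; and the homomorphism $\Theta_{\sf o}\to S_3$ recording the induced permutation of $\{A,B,C\}$ is onto (it hits the three transpositions, by Proposition~\ref{thm:FSR}) between groups of equal order $6$, hence an isomorphism. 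Therefore two points of $\dot{\mathbb C}$ give the same tricycle, up to orientation-preserving similarity, exactly when they lie in a common $\Theta_{\sf o}$-orbit, which is the first displayed identification. Adjoining $H$ enlarges the equivalence by the ambient mirror reflection: by \eqref{eq:H}, $H$ fixes $(A,B,C)$ but reverses the chirality of the tricycle, so $\dot{\mathbb C}/\Theta=(\dot{\mathbb C}/\Theta_{\sf o})/\langle H\rangle$ merges the two enantiomorphic forms of each tricycle, giving the second identification. Finally, the discussion preceding the statement already presents $Q$ (Figure~\ref{fig:12}) as a fundamental domain for $\Theta$, the twelve translates $gQ$ tiling $\dot{\mathbb C}$, and correspondingly $Q\cup HQ$ as a fundamental domain for the index-two subgroup $\Theta_{\sf o}$; composing the two identifications with these fundamental domains yields $\dot{\mathbb C}/\Theta_{\sf o}\cong Q\cup HQ$ and $\dot{\mathbb C}/\Theta\cong Q$.

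The point I expect to need real care is the claim that $\Theta_{\sf o}$ captures \emph{exactly} the relabeling ambiguity, no more and no less. "No less" is the surjectivity-plus-order argument above. "No more" means that $z$ and a nontrivial $\Theta_{\sf o}$-translate of it can coincide only where the induced permutation of $\{A,B,C\}$ genuinely fixes the tricycle, i.e. along the fixed-point loci of the maps in \eqref{eq:FSR}; so one must check that these loci are precisely the strata of symmetric (or degenerate) tricycles --- the line $y=\tfrac12$ where $C=A$ for $F$, the circle $x^2+y^2=1$ where $A=B$ for $S$, the line $x=0$ of mirror-symmetric tricycles for $H$, and similarly for $R$. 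One must also confirm that $z\mapsto(\mathbf a,\mathbf b)$ is well defined modulo the stated ambiguities, including the sign choice implicit in \eqref{eq:ABC}. Beyond this bookkeeping, the argument rests only on Proposition~\ref{thm:FSR}, the group relations listed just before the statement, and the fundamental-domain fact already in hand.
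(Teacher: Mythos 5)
Your proposal is correct and follows essentially the route the paper intends: the paper gives no separate proof of this proposition, presenting it as a reformulation of Proposition~\ref{thm:FSR} (each generator permutes the curvatures $(A,B,C)$ up to a common positive factor, and $\delta$ is symmetric and scale-invariant) combined with the fundamental-domain discussion of $Q$ and $Q\cup HQ$ that precedes the statement. Your elaboration of the quotient identification via the isomorphism $\Theta_{\sf o}\cong S_3$ acting by relabelings, and your flagging of the fixed-point loci and the spinor sign ambiguity, supply details the paper leaves implicit but do not change the argument.
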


The latter, which disregards the chirality, 
coincides with the orbits of the algebraic definition of the process 
for triples as numbers.
The fundamental region represents all tricycles (up to permutations etc.).
Note that every depth (color) is appears in these fundamental domains.

The M{\"o}bius representation of the group elements will be given 
in the next section. 

\begin{figure}[H]
\centering
\includegraphics[scale=.77]{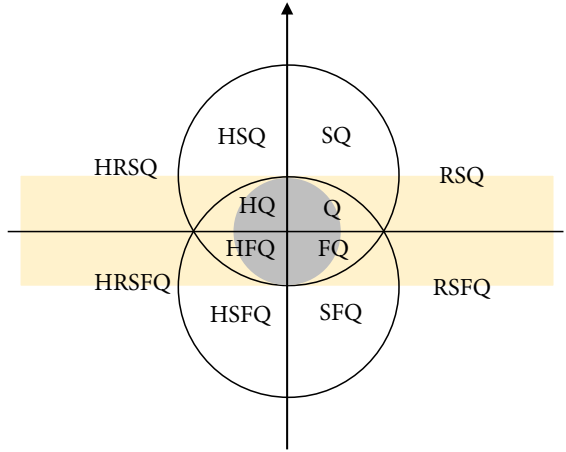}
\caption{\small Twelve regions of the tessellation $\mathbb C/\Theta$.  
Region $Q$ is chosen as the fundamental domain.
The $x$-axis passes through the center of the lower circle. }
\label{fig:12}
\end{figure}

\def\AAA{\mathlarger{\mathcal A}}

\newpage
\section{Apollonian packings and super-Dedekind tessellation}

The projective space of the tangency Pauli spinors considered above may be also used to
classify the Apollonian disk packing.
This will lead us to yet another, infinite, partition of the plane, 
called below the {\bf super-Dedekind tessellation}.
Recall that every tricycle $(A,B,C)$ defines uniquely an Apollonian disk packing $\AAA(A,B,C)$.
Define an equivalence relation which equates tricycles 
if they generate the same Apollonian disk packing (as usual, up to similarity): 
$$
 (A,B,C) \; \sim\; (A', B', C')  \qquad \hbox{if} \qquad \AAA(A,B,C)\; \cong\; \AAA(A'\!, B'\!, C') 
$$
We may execute this equivalence by adding yet another element of symmetry, namely
$$
         T: z \mapsto z +1   
$$

\def\rc{1}
\def\ra{.7}
\def\rb{.45}   
\def\xb{1.077}
\def\yb{.98}
\begin{figure}[h]
\centering
\begin{tikzpicture}[scale=2, rotate=0]
\clip (-1.45,-1.1) rectangle (2.8,1.7);

\draw[fill=gold!42] (-2,-2) rectangle (3,2);
\draw [thick, fill=black] (.7,-.247) circle (1.7464);
\draw [fill=gold!21, thick] (0, 0) circle (\rc);
\draw [fill=gold!21, thick] (\rc+\ra,0) circle (\ra);
\draw [fill=gold!21, thick] (\xb, \yb) circle (\rb);
\draw [fill=gold!42, thick] (\xb-.057, .43) circle (.1);
\draw [thick] (.7,-.247) circle (1.7464);
\node [scale=1.7] at (-\rc/7, -\rc/2) {$C$};
\node [scale=1.7] at (\rc+\ra+.17,   0 +.17) {$A$};
\node [scale=1.7] at (\xb +.14,  \yb+.09) {$B$};
%
\draw [red,->, line width=1.2mm] (\rc-.3, 0)--(\rc+.3, 0);
\node  [scale=1.4]  at (\rc-.4, -.15)  {$\mathbf a$};

\draw [red,->, line width=1.2mm] (.47, .47)--(.97, .91);
\node [scale=1.4]  at (.28, .6)  {$\mathbf b$};

\draw [blue,->, line width=.8mm] (.55, .23)--(.9, .38);
\node [scale=1]  at (.2, .18)  {$\mathbf a+\mathbf b$};

\draw [blue,->, line width=.8mm] (-.7, .2)--(-1.15, .32);
\node [scale=1]  at (-.7, -.02)  {$\mathbf a - \mathbf b$};
\end{tikzpicture}
\caption{Beyond a tricycle via spinors}
\label{fig:a+b}
\end{figure}
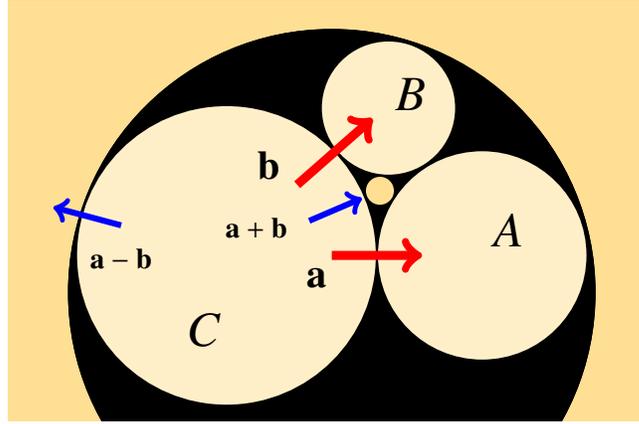

%

Here is why.
Recall that the sum or difference of the spinors $\mathbf a$ and $\mathbf b$ 
is the spinor from $C$ to the disk that is inscribed between the two (see Figure \ref{fig:a+b}).
The two cases correspond to the two ways of completing a tricycle to the Descartes configuration

\begin{equation}
\label{eq:abclean2}
\mathbf a = \begin{bmatrix}
                   1 \\   0
\end{bmatrix},   \ 
\mathbf b = \begin{bmatrix}
                   x \\  y
\end{bmatrix}
 \qquad
 \Rightarrow
 \qquad
\mathbf a = \begin{bmatrix}
                   1 \\   0
\end{bmatrix},  \ 
 \mathbf b\!\pm\!\mathbf a
 =
 \begin{bmatrix}
                   x\pm 1\\  y
\end{bmatrix}
 \end{equation}
In case of the spinors adjusted to the form \eqref{eq:ABCxy},
this corresponds to  in the space of spinors to shift to the left or right by one unit.
Hence we define transformation:
$$
T:z \ \mapsto \ z+1
$$
With the help of the other transformations that permute the disks, we recover 
all disks in the corresponding Apollonian packing.
This leads to infinite extension of the finite groups \eqref{eq:groupsfinite}.
As before, we have two versions: with or without the horizontal reflection $H$.
\begin{definition}
The {\bf Apollonian symmetry groups} are groups generated by the following elements
\begin{equation}
\label{eq:xi}
\Xi_{\sf o} = \gen\{T, S, F\}
\qquad\qquad
\Xi = \gen\{T, S, F,H\}
\end{equation}
\end{definition}

The orbits through any point (tricycle) consists of
all other tricycles (up to scale) in the Apollonian gasket it generates.
(Including and excluding the mirror versions due to the presence or its lack of $H$ in the group.)

~

The tessellations due to the action of the group $\Xi$ are now fragmenting the complex space into 
an infinite set of triangles and is shown in Figure \ref{fig:super}.
We shall call this pattern the {\bf super-Dedekind tessellation} for the reasons explained below.

Analogously to Proposition \ref{thm:theta}, we have now
\begin{proposition}
\label{thm:xi}
The action of the groups \eqref{eq:xi} 
preserve the generated Apollonian disk packings:
$$
\forall g\in \Xi \qquad  \mathcal A(gz) \ = \ \mathcal A(z)
$$
The corresponding quotients of the action are: 
$$
\begin{array}{cl}
\mathbb Z/\Xi_{\sf o} & = \ 
            \left\{ \, {\hbox{\rm all Apollonian packings up to similarity}
            \atop 
            \hbox{\rm  excluding reflections \hfill}}\,\right\}
            \ \cong \  P\cup HP
\\[12pt]
\mathbb Z/\Xi\  &= \ 
            \left\{ \, {\hbox{\rm all Apollonian packings up to similarity}
            \atop 
            \hbox{\rm  including reflections \hfill}}\,\right\}
            \ \cong \  P
\end{array}
$$
where the fundamental domain for the action of $\Xi$ 
may be chosen for instance 
$$
P \ = \ \{ \ (x,y)\in \mathbb R^2 \; |\; 0\leq x \leq 1/2, \, y \leq 0,\;  {\rm and} \ x^2+y^2\geq 1\} \,.
$$
\end{proposition}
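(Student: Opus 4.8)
The plan is to establish Proposition~\ref{thm:xi} in two stages: first the invariance statement $\mathcal A(gz)=\mathcal A(z)$ for every generator of $\Xi$, then the identification of the quotient with the region $P$. For the invariance, note that $\mathcal A(z)$ depends only on the Apollonian packing generated by the tricycle with parameters \eqref{eq:ABCxy}, taken up to similarity. The generators $S$, $F$ (and hence $R=SFS$) are already covered: by Proposition~\ref{thm:FSR} they merely permute the curvatures $A,B,C$ within the same tricycle, and a tricycle and any of its relabelings generate the same packing, so a fortiori $\mathcal A$ is preserved. The reflection $H$ replaces a tricycle by its mirror image, which generates the mirror packing; this is the same packing up to similarity (orientation-reversing similarities are allowed in ``$\cong$''), so $\mathcal A(Hz)=\mathcal A(z)$. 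The one genuinely new generator is $T:z\mapsto z+1$. Here I would invoke the spinor identity recalled around \eqref{eq:abclean2}: adding $\mathbf a$ to $\mathbf b$ produces the tangency spinor from $C$ to one of the two disks $D_\pm$ filling the tricycle $(A,B,C)$, so the tricycle $(C,B,D_+)$ (or $(C,B,D_-)$) is obtained from $(A,B,C)$ by a single Descartes move. Since every disk of an Apollonian packing is reachable from any initial tricycle by a finite sequence of Descartes moves, tricycles related by such a move generate the same packing; hence $\mathcal A(Tz)=\mathcal A(z)$. Because the generators all preserve $\mathcal A$, so does every element of $\Xi$ (and of $\Xi_{\sf o}\subset\Xi$).

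For the quotient identification, I would argue that $P$ (resp. $P\cup HP$) is a strict fundamental domain: every $\Xi$-orbit meets it, and it meets each orbit at most once (up to the boundary identifications inherent in any such tessellation). The natural route is to observe that $\Xi_{\sf o}=\gen\{T,S,F\}$ contains the modular group: indeed $T:z\mapsto z+1$ and an appropriate composition of $S$ and $F$ give an inversion-type element, so $\Xi_{\sf o}$ acts on $\dot{\mathbb C}$ with the standard modular tessellation refined by the extra reflections $F$, $H$ — this is precisely the ``super-Dedekind'' picture of Figure~\ref{fig:super}. The region $P=\{0\le x\le 1/2,\ y\le 0,\ x^2+y^2\ge 1\}$ is the lower-half-plane analogue (reflected across the real axis, which is natural since the ``colored belt'' sits in $y\ge 0$ while $y<0$ corresponds to $C<0$) of the classical modular fundamental domain $\{|x|\le 1/2,\ |z|\ge 1\}$, cut in half by the reflection $H:x\mapsto -x$. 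So the claim reduces to: (a) the subgroup of $\Xi$ generated by $T$ and $R=SFS$ (two parabolic/elliptic generators of orders matching $T^\infty$ and $R^2$, with $(TR)$ of order 3 by the relations listed after \eqref{eq:groupsfinite} together with the new $T$) is a copy of $\mathrm{PSL}(2,\mathbb Z)$ for which $\{|x|\le 1/2,|z|\ge 1\}$ (suitably placed) is the standard fundamental domain; and (b) adjoining the reflections $F$, $H$ (and using $S$ to move between the two half-plane copies) refines this to give $P$.

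Concretely, the key steps in order: (1) record the Möbius/anti-Möbius matrices for $T,S,F,H$ (promised to appear in the next section), and verify the relations $T S T = S$-type and $(\text{something})^3=\id$ that exhibit a $\mathrm{PSL}(2,\mathbb Z)$ subgroup; (2) check that $P$ and its $\Xi$-translates tile $\dot{\mathbb C}$ with no overlaps, e.g. by the standard ping-pong / covering argument for the modular group adapted to the reflections; (3) conclude that points of $P$ biject with $\Xi$-orbits, and likewise $P\cup HP$ with $\Xi_{\sf o}$-orbits; (4) combine with Step-1 invariance of $\mathcal A$ to get that orbits $\leftrightarrow$ Apollonian packings up to similarity, in the mirror-insensitive ($\Xi$) and mirror-sensitive ($\Xi_{\sf o}$) versions. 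That every distinct packing really does arise from a distinct orbit uses that two tricycles in the same packing are always connected by a finite chain of Descartes moves and relabelings, i.e. by a word in $T,S,F,H$ — this is essentially the transitivity of the Apollonian group action on the Descartes configurations in a fixed packing.

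I expect the main obstacle to be Step~(2): proving that $P$ is a \emph{strict} fundamental domain — that no two interior points of $P$ are $\Xi$-equivalent. The invariance direction (Step~1) and the ``every orbit meets $P$'' direction are routine, but ruling out hidden identifications requires either a careful covering-area / Poincaré-polygon argument (showing the angles of $P$ and its images fit together with total angle $2\pi$ around each vertex, using the order relations $(SF)^3=(FR)^3=(RS)^3=\id$ and the parabolic $T$) or an explicit reduction algorithm (given any $z$ in the upper-half region, show the algorithm $\delta$-process itself — replace-largest-curvature — together with the relabelings drives $z$ into $P$ in finitely many steps, and that the choices are forced). The cleanest write-up is probably to note that the $\delta$-reduction process of the Introduction \emph{is} exactly a normal-form algorithm for the $\Xi_{\sf o}$-action and invoke its well-definedness, which was implicitly used already in defining $\delta$; then the uniqueness of the normal form gives the strict fundamental domain property for free.
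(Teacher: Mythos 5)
Your invariance argument is exactly the one the paper uses: $S$, $F$ (hence $R$) permute the curvatures by Proposition~\ref{thm:FSR}, $H$ gives the mirror tricycle, and $T$ is justified by the spinor identity of \eqref{eq:abclean2} --- adding $\mathbf a$ to $\mathbf b$ yields the tangency spinor to a Descartes completion, so $T$ realizes a Descartes move and the $\Xi$-orbit sweeps out all tricycles of the packing. On this part you and the paper coincide. For the second half of the statement (the identification of the quotients with $P$ and $P\cup HP$), the paper in fact offers no proof at all: the proposition is followed only by a descriptive list of properties of the super-Dedekind tessellation and the picture in Figure~\ref{fig:super}, with the fundamental domain simply asserted. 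Your outline therefore goes beyond the source, and your diagnosis is accurate: the genuinely unproven step is that $P$ is a \emph{strict} fundamental domain (no two interior points of $P$ are $\Xi$-equivalent), which would require either a Poincar\'e-polygon/ping-pong argument using the relations $(SF)^3=(FR)^3=(RS)^3=\id$ together with the parabolic $T$, or a normal-form reduction algorithm as you suggest. Neither you nor the paper closes that gap, but identifying it and proposing the two standard routes to close it is the right response; just be aware that the claim ``distinct orbits give distinct packings'' also needs the converse direction (two tricycles generating similar packings are $\Xi$-equivalent), which rests on transitivity of Descartes moves plus the fact that a similarity of packings carries tricycles to tricycles --- worth stating explicitly if you write this up.
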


\noindent
{\bf Description} 
\\

1. The super-Dedekind tessellation 
splits the plane $\mathbb C$
into triangles shown in Figure \ref{fig:super}.
It consists of triangles $(60^\circ, 90^\circ,0^\circ)$,
each of which may serve as the fundamental domain of the action of group $\Xi$.
The vertices have three types of valencies: 4, 6, $\infty$
The ones of valency $\infty$ are located along the circles of the Apollonian Belt
drawn in red in Figure \ref{fig:super}.
The circles are those of the boundaries of the regions of constant depth $\delta$
of the depth fractal.
\\

2. The pints of the triangles  are in 1-to-1 correspondence with all Apollonian disk packings (up to similarity). 
The orbit of any of the points in it travels through 
all tricycles contained in this packing (again, up to similarity).
\\

3. The super-Dedekind tessellation may be viewed as superimposing copies of the standard 
Dedekind tessellation.  Indeed, the lower part in the plane, $y<0$, as well as the upper part, $y>1$,
both are congruent to the Dedekind tessellations of Poincar\'e half-plane
(see \cite{LeB} and \cite{jk-dedekind}.
Note that the central disk of the underlying Apollonian Belt is tessellated by 
by the Dedekind tessellation of the unit disk.
In fact, every red circle of $\mathcal A_B$
contains an appropriate inversion of the Dedekind tessellation.
\\

4. If the mirror symmetry $H$ is excluded from the group,
the tessellation is coarser and consists of triangles $(60^\circ, 60^\circ,0^\circ)$,
made by gluing the former triangles.
The upper and lower part of the tessellation will now correspond to the well-known 
tessellation of the Poincar\'e upper half-plane resulting by the action of the group $\SL(2,\mathbb Z)$,
which in \cite{jk-dedekind} is called modular tessellation.
\\

5. Finally, note that the upper (and the lower) part  of the tessellation coincides with the results of \cite{jk-lattice}
where the integral packings were analyzed with the help of the tangency Pauli spinors.
\\

6. Algebraically, the super-Dedekind fractal is an extended version of the modular and Dedekind tessellations.
determined by the group $\SL(2,\mathbb C$).
Similarly, the super-modular fractal is obtained.
Both are obtained by adding an extra element to the generators, see Figure \ref{fig:groups}.

~\\
{\bf Remark:} The group $\Xi$ has yet another meaning: it is a group of symmetries
of the Apollonian Belt.
The basic element are:

\begin{enumerate}
\item
The inversions in 3-circles: circles that go through the points of contact of any three disks 
forming a tricycle in the disk packing.
\item
The inversions in 2-circles: circles that are determined by any two circles in contact,
namely as the circles that exchange them through the inversion.
They also pass through the contact points of the (infinite ) chain of circles
inscribed between the two circles.
Such a chain may be called a (generalized) {\bf Pappus chain}.  
\end{enumerate}

The proposed name ``Apollonian symmetry group'' is thus justified.
The group of concrete symmetries of other Apollonian packing is isomorphic to $\Xi$.
This matter will be discussed elsewhere.

~

 \def\FF{\phantom{T}}

\begin{figure}
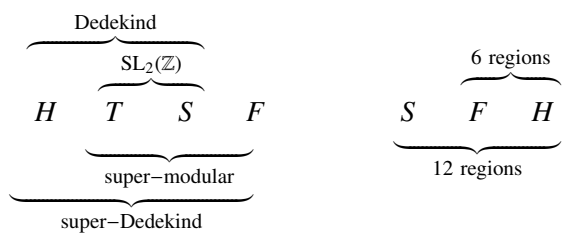

$$
\overbrace{\ H \ \quad  \overbrace{\ T \phantom{\|^{|}} \quad \ S\  }^{{\rm SL}_2(\mathbb Z)}}^{\rm Dedekind}  \quad \  F\  
\qquad\qquad 
\underbrace{ \ S \ \quad \overbrace{\ F \phantom{\Big|} \quad  \ H \ }^{\rm 6 \ regions}}_{\rm 12\ regions}  
$$
\vspace{-45pt}
$$
 \underbrace{ \ \; \FF\  \quad   \underbrace{\ \FF\ \quad \ \FF\ \quad\ \FF\ \; }_{\rm super-modular}}_{\rm super-Dedekind}\quad \
 \phantom{\qquad\qquad
 \underbrace{ \ S \ \quad \overbrace{\ F \ \quad  \ H \ }^{\rm 6666  }}_2 }  
$$
\caption{The groups discussed and their generators}
\label{fig:groups}
\end{figure}

\begin{figure}[H]
\centering
\begin{tikzpicture}[scale=3.7]
\clip (-2,-3) rectangle (2.01,1.5);

\draw (-2.5,-.5)--(2.5, -.5);
\foreach \n in {0, 1, 2, 3, 4}
\draw (\n + 0.5, -2.5)--(\n + 0.5, 1.5) 
          (-\n - 0.5, -2.5)--(-\n - 0.5, 1.5); 
\foreach \n in {0, 1, 2, 3, 4, 5}
\draw[color=blue] (\n , -2.5)--(\n , 1.5) 
          (-\n , -2.5)--(-\n , 1.5);           
          
\foreach \n in {-2, -1, 0, 1, 2}
\draw (\n , 0) circle (1/1)
         (\n , -1) circle (1/1);
\foreach \n in {-3,-2, -1, 0, 1,2, 3}
\foreach \a/\b   in {  
1/3,  1/5,  1/7,  1/9,  1/11,  1/13,  1/15, 1/17,  1/19,  1/21,   1/23,  
1/25,  1/27,  1/29,  1/31,  1/33, 1/35, 
  1/37,  1/39,   1/41,  1/43,
1/45,  1/47,  1/49 
 4/15,  8/21, 10/33,   6/35, 14/39, 19/45 
 }
\draw (\n + \a/\b, 0) circle (1/\b)
           (\n - \a/\b, 0) circle (1/\b)
          (\n + \a/\b, -1) circle (1/\b)
           (\n - \a/\b, -1) circle (1/\b)
           ;
\foreach \n in {-3,-2, -1, 0, 1,2, 3}
\foreach \a/\b   in {  
3/8,  7/16,  5/24,  11/24,
15/32,  11/40,  19/40,  7/48,  23/48   
}
\draw [color=black] (\n + \a/\b, 0) circle (1/\b)
           (\n - \a/\b, 0) circle (1/\b)
          (\n + \a/\b, -1) circle (1/\b)
           (\n - \a/\b, -1) circle (1/\b)
           ;

\foreach \n in {-3,-2, -1, 0, 1,2, 3}
\foreach \a/\b   in {  
1/2,  1/4, 1/6, 1/8, 1/10, 1/12, 1/14, 1/16, 1/18 , 1/20, 1/22, 1/24, 
1/26, 1/28,  1/30,  1/32, 1/34, 1/36 
,1/38, 1/40,  1/42, 1/44, 1/46, 1/48
}
\draw[color=blue] (\n + \a/\b, 0) circle (1/\b)
                         (\n - \a/\b, 0) circle (1/\b)      
                         (\n + \a/\b, -1) circle (1/\b)
                         (\n - \a/\b, -1) circle (1/\b)      
                              ;

\foreach \n in {-3,-2, -1, 0, 1,2, 3}
\foreach \a/\b   in {  
5/12, 9/20, 7/24, 13/28, 11/30, 17/36, 9/40, 13/42, 21/44, 17/48
}
\draw[color=blue] (\n + \a/\b, 0) circle (1/\b)
                           (\n - \a/\b, 0) circle (1/\b)           
                           (\n + \a/\b, -1) circle (1/\b)
                           (\n - \a/\b, -1) circle (1/\b)         
;

\foreach \a/\b/\c   in {  
2	/	1	/	1	,
2	/	2	/	2	,
2	/	3	/	3	,
2	/	4	/	4	,
4	/	2	/	4	,
2	/	5	/	5	,
2	/	6	/	6	,
6	/	2	/	6	,
2	/	7	/	7	,
2	/	8	/	8	,
8	/	2	/	8	,
6	/	7	/	9	,
2	/	9	/	9	,
2	/	10	/	10	,
10	/	2	/	10	,
2	/	11	/	11	,
10	/	5	/	11	,
2	/	12	/	12	,
12	/	2	/	12	,
2	/	13	/	13	,
10	/	10	/	14	,
2	/	14	/	14	,
14	/	2	/	14	,
2	/	15	/	15	,
8	/	14	/	16	,
2	/	16	/	16	,
16	/	2	/	16	,
14	/	8	/	16	,
2	/	17	/	17	,
2	/	18	/	18	,
18	/	2	/	18	,
14	/	13	/	19	,
2	/	19	/	19	,
2	/	20	/	20	,
20	/	2	/	20	,
2	/	21	/	21	,
18	/	11	/	21	,
2	/	22	/	22	,
22	/	2	/	22	,
2	/	23	/	23	,
22	/	7	/	23	,
18	/	16	/	24	,
16	/	18	/	24	,
2	/	24	/	24	,
24	/	2	/	24	,
10	/	23	/	25	,
2	/	25	/	25	,
14	/	22	/	26	,
2	/	26	/	26	,
26	/	2	/	26	,
22	/	14	/	26	,
22	/	14	/	26	,
2	/	27	/	27	,
2	/	28	/	28	,
28	/	2	/	28	,
22	/	19	/	29	,
2	/	29	/	29	,
26	/	13	/	29	,
2	/	30	/	30	,
30	/	2	/	30	,
2	/	31	/	31	,
26	/	17	/	31	,
2	/	32	/	32	,
32	/	2	/	32	,
2	/	33	/	33	,
26	/	22	/	34	,
22	/	26	/	34	,
2	/	34	/	34	,
34	/	2	/	34	,
2	/	35	/	35	,
20	/	30	/	36	,
12	/	34	/	36	,
2	/	36	/	36	,
36	/	2	/	36	,
34	/	12	/	36	,
30	/	20	/	36	,
34	/	12	/	36	,
2	/	37	/	37	,
2	/	38	/	38	,
38	/	2	/	38	,
30	/	25	/	39	,
2	/	39	/	39	,
38	/	9	/	39	,
2	/	40	/	40	,
40	/	2	/	40	,
2	/	41	/	41	,
34	/	23	/	41	,
18	/	38	/	42	,
2	/	42	/	42	,
42	/	2	/	42	,
38	/	18	/	42	,
22	/	37	/	43	,
2	/	43	/	43	,
34	/	28	/	44	,
28	/	34	/	44	,
2	/	44	/	44	,
44	/	2	/	44	,
2	/	45	/	45	,
26	/	38	/	46	,
2	/	46	/	46	,
46	/	2	/	46	,
38	/	26	/	46	,
2	/	47	/	47	,
2	/	48	/	48	,
48	/	2	/	48	,
38	/	31	/	49	,
14	/	47	/	49	,
2	/	49	/	49	,
46	/	17	/	49	
}
\draw 
          (.5*\a / \c,  .5* \b / \c  -.5)  circle  (1/\c) 
          (.5*\a / \c, -.5*\b / \c -.5)  circle  (1/\c)
          (-.5*\a / \c, .5*\b / \c - .5)  circle  (1/\c)
          (-.5*\a / \c, -.5*\b / \c -.5)  circle  (1/\c) 
          (.5*\a / \c +1,   .5* \b / \c  -.5)  circle  (1/\c) 
          (.5*\a / \c  +1, -.5*\b / \c -.5)  circle  (1/\c)
          (-.5*\a / \c +1, .5*\b / \c - .5)  circle  (1/\c)
          (-.5*\a / \c +1, -.5*\b / \c -.5)  circle  (1/\c) 
          (.5*\a / \c  -1,   .5* \b / \c  -.5)  circle  (1/\c) 
          (.5*\a / \c  -1, -.5*\b / \c -.5)  circle  (1/\c)
          (-.5*\a / \c  -1, .5*\b / \c - .5)  circle  (1/\c)
          (-.5*\a / \c  -1, -.5*\b / \c -.5)  circle  (1/\c) ;


\foreach \a/\b/\c   in {  
0/0/	1,  1/0/2,  0/2/3,  3/4/6, 0/4/15,
8/6/11,  5/12/14,   15/8/18,  8/12/23
}
\draw [color=red]
          (.5*\b/ \c, .5*\a/ \c  -.5)  circle  (.5/\c)
          (.5*\b/ \c, -.5*\a/ \c  -.5)  circle  (.5/\c)
          (-.5*\b/ \c, .5*\a/ \c  -.5)  circle  (.5/\c)
          (-.5*\b/ \c, -.5*\a/ \c -.5)  circle  (.5/\c) ;
\draw [color=red] (-3,0)--(3,0);
\draw [color=red] (-3,-1)--(3,-1);

\foreach \a/\b/\c in {
0/0/1,  4/3/4,  12/5/12, 6/8/9,  12/8/9,
4/-3/4,  12/-5/12,  6/-8/9,  12/-8/9
}
\draw [color=red, thick] (.5*\a/\c, .5*\b/\c -.5) circle (.5/\c)         (-.5*\a/\c, .5*\b/\c -.5) circle (.5/\c)
          (.5*\a/\c+1, .5*\b/\c -.5) circle (.5/\c)     (-.5*\a/\c -1, .5*\b/\c -.5) circle (.5/\c)
          (.5*\a/\c+2, .5*\b/\c -.5) circle (.5/\c)     (-.5*\a/\c -2, .5*\b/\c -.5) circle (.5/\c);

\end{tikzpicture}
\caption{The Super-Dedekind tessellation, a fragment.}
\label{fig:super}
\end{figure}
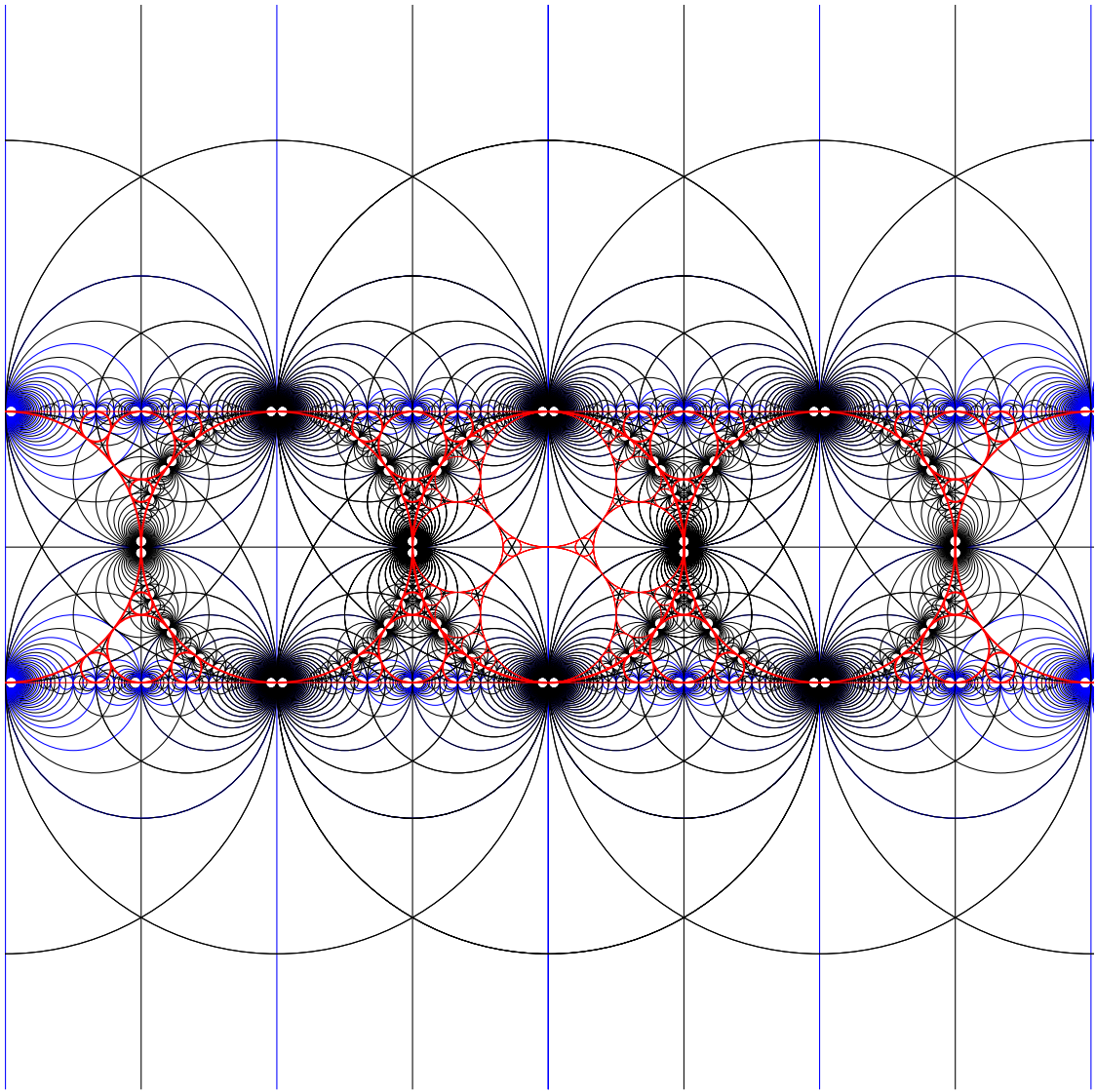

\newpage
\noindent
{\bf A note on M\"obius maps}
\\\\
The transformations \eqref{eq:FSR} and \eqref{eq:H}
may be expressed in terms of linear fractional maps:  
\begin{equation}
\label{eq:geo}
~\hspace{-.77in}
{\hbox{\sf Geometric}\atop\hbox{\sf version}}
\qquad
\begin{array}{rl}
T: z \mapsto z +1 \ &= \   
\begin{bmatrix}
                   1 & 1 \\ 
                   0 & 1
\end{bmatrix} 
 \cdot  z
 \\[11pt]
F: z \mapsto \bar z + i \ &= \   
\begin{bmatrix}
                   1 & i \\ 
                   0 & 1
\end{bmatrix} 
 \cdot \bar z
\\[11pt]
S: z \mapsto \dfrac{-1}{\bar z} \ &= \   
\begin{bmatrix}
                   0 & \!\!-1 \\ 
                   1 &  0
\end{bmatrix} 
 \cdot \bar z
\\[11pt]
R: z \mapsto \dfrac{i\bar z}{\bar z +i} \ &= \   
\begin{bmatrix}
                   1 & 0 \\ 
                   -i &  1
\end{bmatrix} 
 \cdot \bar z
\\[11pt]
H: z \mapsto -\bar z \ &= \   
\begin{bmatrix}
                   i & 0 \\ 
                   0 &  -i
\end{bmatrix} 
 \cdot \bar z
 \end{array}
 \end{equation}

Note that for consitency we should replace the translation $T$
by reflection through the vertical line $=1/2$, i.e. by 
$$
P:  z\ \mapsto \ 1-\bar z \ =\  \begin{bmatrix}
                   \! -i & i \\ 
                   0 & i
\end{bmatrix} 
 \cdot \bar z
$$
Translation is now recovered as $T = PH$, and $T^{-1} = HP$.  

Algebraic investigations may benefit from replacing the above 
with a subgroup of ${\rm PSL}(2,\mathbb Z[i])$ 
with the following alternatives that omit the use of the conjugation of $z$:
\begin{equation}
\label{eq:alg}
~\hspace{-.77in}
{\hbox{\sf Algebraic }\atop\hbox{\sf version}}
\qquad
\begin{array}{rl}
\hat T: z \mapsto  z +1 \ &= \   
\begin{bmatrix}
                   1 & 1 \\ 
                   0 & 1
\end{bmatrix} 
 \cdot  z
\\[11pt]
\hat F: z \mapsto -z + i \ &= \   
\begin{bmatrix}
                   i & 1 \\ 
                   0 & -i
\end{bmatrix} 
 \cdot  z
\\[11pt]
\hat S: z \mapsto \dfrac{-1}{z} \ &= \   
\begin{bmatrix}
                   0 & -1 \\ 
                   1 &  0
\end{bmatrix} 
 \cdot  z
\\[11pt]
\hat R: z \mapsto \dfrac{i z}{ z -i} \ &= \   
\begin{bmatrix}
                   i & 0 \\ 
                   1 & -i
\end{bmatrix} 
 \cdot  z
\\[11pt]
 \end{array}
 \end{equation}
Note that the transformations $\hat T$ and $\hat S$ generate 
the modular group ${\rm SL}(2,\mathbb Z)$.
We excluded an equivalent of $H$, which would have to be  
$\hat H z = -z$, but this would allow for $FH)^n\cdot z$ which moves $z$ in the complex plane 
vertically, $z\mapsto z+ni$, which cleaely does not belong to the symmetry groups
of the pattern.

\newpage

\section{Addenda}

\subsection{Game, the algorithm redefined in terms of $z$}

The process $\pi$ described in \eqref{eq:abc} for $\mathbb R^3$ 
may be redefined for the projective spinor space $\hat{\mathbb C}$
as a chain of maps that consistently push some initial $z$ towards the ``dark'' region of $\delta=0$.
We may facilitate it using two maps $\alpha,\beta:\mathbb C \to \mathbb C$.
The first is the inverse of $T$:
$$
\alpha: z\to z-1
$$
Map $\beta$ is a combination of reflections $S$ with conditional reflection $F$, 
and may be written as a single algebraic expression with the use of the  absolute values,
as shown in the box below.

Initial value $z$ can be chosen (or adjusted) to be on the right side of the plane, ${\rm Re} z>0$.
The process is contained in 
the upper part of the color belt,
$1/2 \geq y \geq 1$.
The goal is to bring $z$ to the ``dark disk'' $|z-1/2|<1/4$.
Here is the algorithm spell out:  

\begin{figure}[H]
\begin{minipage}[c]{.2\textwidth}
~~~
\end{minipage}
\boxed{\qquad
\begin{minipage}[c]{.5\textwidth}
\small
~\\
{\bf Algorithm redefined for $z\in C$}
~\\\\
0.  INPUT (x,y) \\
\phantom{.}\quad  $x=|x|$,  \ $d=0$ \\
1.  $ \omega = x^2+(1/2-y)^2$\\
2.  IF $\omega$ < 1/4, RETURN $d$, END  \\
\phantom{.}~\qquad     else\\ 
\phantom{.}~\qquad         if $x^2+(y-1)^2 < 1$ then \\
\phantom{.}~\qquad\qquad             $ x \ \to \ \frac{x}{x^2+(y-1)^2}$ \\
\phantom{.}~\qquad\qquad             $ y \ \to \  \frac{1}{2}+\left|\frac{x^2+y^2-y}{x^2+(y-1)^2}-\frac{1}{2}\right|$ \\
\phantom{.}~\qquad       end if\\
\phantom{.}~\qquad    $x := |x-1|$\\ 
\phantom{.}~\qquad    $d  := d+1$ \\
3. GO TO 1.\\
\end{minipage}
}
\caption{The algorithm for the motion in the projective tangency Pauli spinor space
that mimics that of the process of finding the depth of a tricycle}
\end{figure}

The string of operations is thus a word in the alphabet $\{\alpha,\beta\}$, starting with $\alpha$.
Since $\beta^2=\id$, the word is of the form
$$
\alpha^{n_1}\beta\alpha^{n_2}\beta\alpha^{n_3}B\; \ldots \; \beta\alpha^{n_m}
$$
for some $m$, and
$$
\delta = \sum_i n_i\,.
$$

\newpage
\subsection{ Remark on ambiguity of spinor description}

Two tangency spinors of Figure \ref{fig:a+b} totally determine the sizes of the tricycle 
and orientation in the Euclidean space 
(only the position is translationally not determined).
However, in trying to establish the map from the space of tricycles 
to the couples of tangency spinors (or Pauli spinors):
$$
\hbox{tricycles (up to translation)}\  \to \ \mathbb C^2
$$
we encounter an ambiguity.
The problem is in the choice of which disk is the anchor fort the spins $\mathbf a$ and $\mathbf b$ 
in Figure \ref{fig:a+b}.
We will tackle this ambiguity now.
Figure \ref{fig:6spinors} shows a more regular notation for all possible spinors.
With the notation shown we have 
$$
\begin{array}{lll} 
\mathbf a= \tspin(B,C)
&\qquad
&\mathbf a^\str= \tspin(C,B)  = \  i \mathbf a
\\
\mathbf b= \tspin(C,A)
&&\mathbf b^\str= \tspin(A,C)  \ = \  i \mathbf b
\\
\mathbf c= \tspin(A,B)
&&\mathbf c^\str  = \tspin(B,A) \ = \  i \mathbf c
\\[7pt]
\mathbf a + \mathbf b + \mathbf c= 0
&&
\mathbf a^\str +  \mathbf b^\str + \mathbf c^\str =  0
\end{array}
$$
The convention of the counterclockwise order of spinors for cross product holds:
$$
\mathbf b \times \mathbf a^\str = C
\,, \qquad
\mathbf c \times \mathbf b^\str = A
\,, \qquad
\mathbf a \times \mathbf c^\str = B
\,.
$$

\begin{figure}[t]
\centering
\def\rc{1}
\def\ra{.7}
\def\rb{.45}   
\def\xb{1.079}
\def\yb{.975}
\begin{tikzpicture}[scale=4.0, rotate=0]
\clip (-.2,-.3) rectangle (1.7,1.05);
\draw [fill=gold!10, thick] (0, 0) circle (\rc);
\draw [fill=gold!10, thick] (\rc+\ra,0) circle (\ra);
\draw [fill=gold!10, thick] (\xb, \yb) circle (\rb);

\node [scale=1.6] at (0, -\rc/7) {$\sf C$};
\node [scale=1.6] at (\rc+.57,   -\rc/7) {$\sf A$};
\node [scale=1.6] at (\xb +.16,  \yb-.02) {$\sf B$};
%
\draw [red,->, line width=1.2mm] (\rc+.03-.21, 0.1)--(\rc+.03+.21, 0.1);
\node  [scale=1.4]  at (\rc-.2, 0+.21)  {$\mathbf b$};
\draw [green,<-, line width=1.2mm] (\rc-.2, -.07)--(\rc+.2, -.07);
\node  [scale=1.4]  at (\rc+.3, 0-.14)  {$\mathbf b^\str$};

\draw [red,<-, line width=1.2mm] (.77-.15, .62-.4/3)--(.77+.15, .62+.4/3);   
\node [scale=1.4]  at (.98, .68)  {$\mathbf a$};
\draw [green,->, line width=1.2mm] (.7-.15, .75-.4/3)--(.7+.15, .75+.4/3);   
\node [scale=1.4]  at (.47, .71)  {$\mathbf a^\str$};

\draw [red,->, line width=1.2mm] (1.26+.08, .56-.15)--(1.26-.08, .56+.15);   
\node [scale=1.4]  at (.\rc+1.12, .41)  {$\mathbf c$};
\draw [green,<-, line width=1.2mm] (1.41+.08, .6-.15)--(1.41-.08, .6+.15);   
\node [scale=1.4]  at (.\rc+1.32, .82)  {$\mathbf c^+$};

\end{tikzpicture}
\caption{Tricycle and two spinors}
\label{fig:6spinors}
\end{figure}
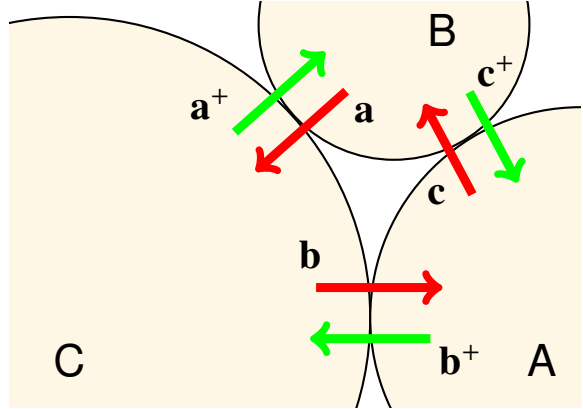

The complex representation is here more convenient because of the simple definition of the 
symplectic conjugation $+$.  
Recall that $\mathbf (a^+)^+ = -\mathbf a$.
Also, note that changing the sign in all spinors will not affect the signs of the curvatures
calculated by the cross-products.
The projection \eqref{eq:project} should be now written as  
$$
\mathbf b, \; \mathbf a^\str \quad \mapsto \quad 
\begin{bmatrix}
                   \mathbf b\  \\ \mathbf  a^\str
                   \end{bmatrix} 
\quad  \mapsto \quad z=\frac{\mathbf b\ }{\mathbf a^\str}\,,
$$
and similarly for the other two pairs.
We shall now show that the different  choices of anchors lead to 
complex number representation that differ by an equivalency defined by a subgroup of $\Theta$.
\def\pp{\phantom{{}^\str}}    
$$
\begin{array}{rlll}
z_1\ &= \ \dfrac{\mathbf b\pp}{\mathbf a^\str} \ = \ z 
\\[14pt]
z_2 \ &= \ \dfrac{\mathbf a\pp}{\mathbf c^\str} \ = \ 
         \dfrac{\mathbf a}{-\mathbf a^\str-\mathbf b^\str} \ = \ 
         \dfrac{1}{-i+\frac{\mathbf b\pp}{\mathbf a^\str}} \ = \ 
         \dfrac{-1}{i - z} \ = \ 
         SF\, z
\\[17pt]
z_3 \ &= \ \dfrac{\mathbf c\pp}{\mathbf b^\str} \ = \ 
         \dfrac{-\mathbf a-\mathbf b}{~~\mathbf b^\str} \ = \ 
         -\dfrac{\mathbf a}{i\mathbf b} - \dfrac{\mathbf b}{i\mathbf b} \ = \ 
         \dfrac{i\mathbf a}{\mathbf b} +i \ = \ 
         i - \dfrac{-1}{ z} \ = \ 
         FS\, z \ = \ FS\, z
\end{array}
$$

This defines a three-element group of $\Theta_0$:
$$
G\ = \ \{\id,\, FS,\, SF\} \ \cong \ \mathbb Z_3
$$
Note that we can use algebraic version of the transformations, 
i.e., $FS= \hat F\hat S$ and $SF=\hat S\hat F$. 
(An even number of transformations in \eqref{eq:geo} 
will ``cancel'' the complex conjugations).
The action of this group interchanges the regions
$$
\begin{array}{rll}
X&=\{\;z\;\big|\; \hbox{Im}\, z>1/2 \:|\;\hbox{and}|\; |z|\geq 1\,\} \\[7pt]
Y&=\{\;z\;\big|\; \;|z|\leq 1\;\hbox{and}\; |z-1/2|\leq 1\,\} \\[7pt]
Z&=\{\;z\;\big|\; \hbox{Im} \,z>0| \;\hbox{and}\; |z|\leq 1/2\,\}\,,
\end{array}
$$
any of which can be chosen as the fundamental domain.
Under the identification defined by the action of this group, we
obtain the space topologically homeomorphic to sphere, $S^2$,
which is easy to see when we pick the cigar-like region $Y$
and identify the top and bottom edges. 
The whole spinor complex plane $\mathbb C\cup \{\infty\}$,
which topologically is also a sphere, ``wraps'' around the sphere $Y$ three times,
forming a fiber bundle with discrete fibers coinciding with the orbits of $G$
in $\dot{\mathbb C}$.

\newpage
\subsection{ Remark on integrality}

\begin{proposition}
\label{thm:rational}
Every rational disk packing is integral by scaling by some integer.
\end{proposition}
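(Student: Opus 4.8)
The plan is to translate the statement into the language of the Descartes quadratic form and reduce it to a divisibility argument. Recall that a disk packing is rational if all its curvatures are rational numbers; it is integral if all are integers. Given a rational Apollonian packing $\mathcal A(A,B,C)$, write the four curvatures of any Descartes quadruple in it as fractions with a common denominator $N\in\mathbb Z_{>0}$. The claim is that multiplying every curvature in the packing by $N$ (a similarity transformation, hence preserving the packing up to similarity) yields an integral packing, i.e.\ that \emph{every} curvature of the scaled packing is an integer, not merely the four we started with.

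First I would recall the generation mechanism: every curvature in an Apollonian packing is obtained from an initial Descartes quadruple $(a,b,c,d)$ by iterated Descartes moves, i.e.\ by repeatedly replacing one entry $e$ of a quadruple $(e,f,g,h)$ with its partner $e' = 2(f+g+h) - e$, which follows from the Descartes relation \eqref{eq:Descartes} rewritten as a quadratic in $e$ with $e+e' = 2(f+g+h)$. The key observation is that this replacement rule is \emph{integer-linear}: if $e,f,g,h$ are all integers then so is $e'$. Hence I would argue by induction on the number of Descartes moves needed to reach a given disk: the base case is the initial quadruple $(Na,Nb,Nc,Nd)$, which consists of integers by the choice of $N$; and the inductive step is exactly the observation that $e' = 2(f+g+h)-e$ preserves integrality. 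Since the scaling $x\mapsto Nx$ is linear it commutes with all Descartes moves, so the scaled packing is generated from $(Na,Nb,Nc,Nd)$ by the same sequence of moves, and every one of its curvatures is therefore an integer.

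The one genuine subtlety — and the step I expect to be the main obstacle — is justifying that a single Descartes quadruple already has all curvatures rational, i.e.\ that ``rational packing'' (every curvature rational) is equivalent to ``some generating quadruple is rational.'' If rationality is assumed for all curvatures this is immediate; if it is assumed only for a generating tricycle $(A,B,C)$ one must check that the fourth curvature $d = A+B+C \pm 2\sqrt{AB+BC+CA}$ is rational, which need not hold (the square root can be irrational). I would therefore take the hypothesis to mean that \emph{the packing is rational}, i.e.\ some Descartes quadruple in it has rational curvatures — equivalently, by the integer-linear propagation above, \emph{all} curvatures are rational as soon as one quadruple is. With that reading the proof is the clean induction sketched above: choose a rational Descartes quadruple, clear denominators with $N$, and invoke integrality of the replacement rule $e'=2(f+g+h)-e$ together with its commutation with the dilation $x\mapsto Nx$.
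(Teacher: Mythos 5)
Your proof is correct and follows essentially the same route as the paper's: pick a Descartes quadruple, clear denominators by an integer scaling, and propagate integrality to the whole packing. The paper's version is a two-line sketch that leaves the propagation implicit, whereas you supply the key ingredient it omits --- the integer-linearity of the Descartes move $e' = 2(f+g+h)-e$ and the induction on the number of moves --- which is exactly the right justification for the paper's ``hence all discs are integral.''
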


\begin{proof}  Pick a quadruple of disks in the packing in the Descartes configuration.
As a finite set, it may be scaled to an integral quadruple.
Hence all discs are integral.
\end{proof}

\begin{proposition}
Every rational Pauli spinor generates a (scaled) integral packing.
\end{proposition}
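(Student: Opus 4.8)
The plan is to deduce the statement from Proposition~\ref{thm:rational} by first checking that a rational Pauli spinor already produces a rational --- indeed, after an overall rescaling, an integral --- Descartes quadruple inside the packing it generates. Write the spinor as $\xi=(\mathbf a,\mathbf b)$ and call it rational when $\mathbf a,\mathbf b\in\mathbb Q[i]$, which by the projectivization~\eqref{eq:project} is the same as taking $z=x+iy$ with $x,y\in\mathbb Q$. First I would read the curvatures of the generated tricycle straight off the spinor dictionary~\eqref{eq:ABC}, namely $C=\mathbf a\times\mathbf b$, $A=\|\mathbf a\|^2-C$, and $B=\|\mathbf b\|^2-C$. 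Cross products and squared moduli of Gaussian rationals are rational, so $A,B,C\in\mathbb Q$.

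The one place where the spinor picture does genuine work is that a rational tricycle need not \emph{visibly} complete to a rational Descartes quadruple, since the fourth curvature in~\eqref{eq:Descartespm} carries the radical $\sqrt{AB+BC+CA}$. But the relation $\|\mathbf a\pm\mathbf b\|^2=C+D_{\pm}$ of~\eqref{eq:ABC} gives $D_{\pm}=\|\mathbf a\pm\mathbf b\|^2-C$, and $\mathbf a\pm\mathbf b\in\mathbb Q[i]$, so $D_{\pm}\in\mathbb Q$ automatically; thus $(A,B,C,D_{+})$ is an honest rational Descartes quadruple of $\mathcal A(\xi)$. Clearing the common denominator $N$ of the entries of $\mathbf a$ and $\mathbf b$ replaces $\xi$ by $N\xi$, which multiplies every curvature by $N^{2}$ and hence is a similarity of the whole packing; after this one may assume $\mathbf a,\mathbf b\in\mathbb Z[i]$ and the quadruple $(A,B,C,D_{+})$ integral.

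It then remains to push integrality through the whole gasket, which is exactly the content of Proposition~\ref{thm:rational} once one rational Descartes quadruple is in hand: every disk of $\mathcal A(\xi)$ is reached from $(A,B,C,D_{+})$ by a finite sequence of Descartes moves, and such a move is $\mathbb Z$-linear, since the two solutions of~\eqref{eq:Descartespm} sum to $2(a+b+c)$, that is, it sends a curvature $d$ to $2(a+b+c)-d$. Hence the packing is rational, and integral after the scaling by $N$ described above. I do not expect a real obstacle here: the only subtlety is the gap between "rational tricycle" and "rational Descartes quadruple", which~\eqref{eq:ABC} closes at no cost, together with the remark that the normalization $\xi\mapsto N\xi$ is harmless because it is a similarity of $\mathcal A(\xi)$.
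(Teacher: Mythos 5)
Your proposal is correct and follows essentially the same route as the paper: use the spinor relations \eqref{eq:ABC} (in particular $\|\mathbf a\pm\mathbf b\|^2=C+D_{\pm}$) to see that the completing disk is rational, then invoke Proposition \ref{thm:rational}. You merely spell out the details the paper leaves implicit, such as the $\mathbb Z$-linearity of the Descartes move $d\mapsto 2(a+b+c)-d$ and the harmlessness of the rescaling.
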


\begin{proof} 
Equation \eqref{eq:ABC} implies that the disk $D$ that completes the tricycle
determined by $z$ 
to the Descartes configuration is also rational.
By Proposition \ref{thm:rational}, the claim holds.
\end{proof}

\begin{proposition}
Only rational x,y generates (scaled) integral disk packing.
\end{proposition}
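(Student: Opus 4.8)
The plan is to establish the converse of the preceding proposition, in contrapositive form: assuming the packing generated by the tricycle with parameter $z=x+iy$ becomes integral after rescaling by some $\lambda>0$, I will deduce that $x,y\in\mathbb{Q}$. Throughout, curvatures are read off from \eqref{eq:ABCxy}, so the tricycle has curvatures proportional to $A=1-y$, $B=x^{2}+y^{2}-y$, $C=y$, and ``integral after rescaling by $\lambda$'' means that $\lambda\kappa\in\mathbb{Z}$ for the curvature $\kappa$ of every disk of the packing $\mathcal{A}(A,B,C)$.

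First I would pin down $\lambda$ and $y$. Since $A+C=1$, the two integers $\lambda A$ and $\lambda C$ sum to $\lambda$, so $\lambda\in\mathbb{Z}_{>0}$; hence $y=C=(\lambda C)/\lambda$ and $B=(\lambda B)/\lambda$ are both rational. But rationality of $B$ alone only yields $x^{2}=B+y-y^{2}\in\mathbb{Q}$, which does not force $x\in\mathbb{Q}$; supplying the missing argument for $x$ is the crux.

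To obtain $x$ itself I would bring in a fourth disk. By \eqref{eq:abclean2} the two disks completing the tricycle to a Descartes quadruple carry spinors $\mathbf{a}\pm\mathbf{b}=(x\pm1,\,y)$, so part (iv) of \eqref{eq:ABC} gives $C+D_{\pm}=\|\mathbf{a}\pm\mathbf{b}\|^{2}=(x\pm1)^{2}+y^{2}$, and therefore $D_{+}-D_{-}=4x$ (equivalently, $\mathbf{a}\cdot\mathbf{b}=x$ is the mid-circle curvature $K$ of part (v)). Both $D_{+}$ and $D_{-}$ occur as curvatures of disks of $\mathcal{A}(A,B,C)$, so $\lambda D_{+}$ and $\lambda D_{-}$ are integers; consequently $4\lambda x=\lambda(D_{+}-D_{-})\in\mathbb{Z}$, whence $x=(4\lambda x)/(4\lambda)\in\mathbb{Q}$. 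Together with the rationality of $y$, this proves the statement.

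The only genuinely delicate point --- and the step I would flag as carrying the content --- is exactly the gap between ``$x^{2}$ rational'' and ``$x$ rational'': it is closed by observing that integrality of the packing constrains not only the three initial curvatures but also the Descartes completions $D_{\pm}$, whose difference is \emph{linear} rather than quadratic in $x$. A routine consistency check, that $D_{+}-D_{-}=4\sqrt{AB+BC+CA}$ agrees with $D_{+}-D_{-}=4x$, amounts to the one-line identity $AB+BC+CA=x^{2}$ for the values in \eqref{eq:ABCxy}.
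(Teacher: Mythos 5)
Your proof is correct and follows essentially the same route as the paper's: rationality of $C$ and $B$ gives $y$ and $x^{2}$ rational, and the gap from $x^{2}$ to $x$ is closed by invoking the Descartes completion $D_{\pm}$ through relation (iv) of \eqref{eq:ABC}, whose dependence on $x$ is linear. Your version is somewhat more carefully argued (pinning down the scaling factor $\lambda$, and using the clean identity $D_{+}-D_{-}=4x$ where the paper writes the slightly garbled ``$1+\sqrt{q}^{2}\in\mathbb{Q}$'' for what should be $(1+\sqrt{q})^{2}\in\mathbb{Q}$), but the key idea is identical.
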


\begin{proof}
Referring to parametrization \eqref{eq:ABCxy}, since $C$ is rational, so is $y$ by (i). 
Since $B=x^2+y^2 -y$ is rational, so is $x^2$.  
Thus $x=\sqrt{q}$ for some $q\in \mathbb Q$.
Now, since $D$ is rational, 
$1+\sqrt{q}^2 \in \mathbb Q$  (by (iv).
Thus 
$\sqrt{q}\in \mathbb Q$,
or $q$ is a square.
\end{proof}

Hence, the integral packings correspond to (are coded by) the rational complex numbers , $z\in \mathbb Q[i]$,
forming a dense subset of the plane $\mathbb C$.
A related picture may be found in \cite{jk-lattice}.

%
%
%

\section*{Appendix A: Apollonian disk packings}

An {\bf Apollonian disk packing} is an arrangement of an infinite number of disks.
Such an arrangement may be constructed by starting with a tricycle, 
called in this context a {\bf seed},
and completing recursively every tricycle already constructed to a Descartes configuration.

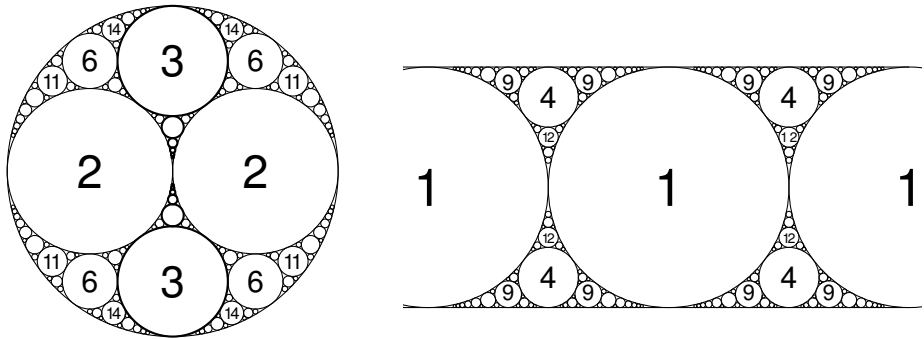
\begin{figure}[H]
\centering
\begin{tikzpicture}[scale=2.2]
\draw (0,0) circle (1);
\foreach \a/\b/\c   in {
1 / 0 / 2 
}
\draw (\a/\c,\b/\c) circle (1/\c)
          (-\a/\c,\b/\c) circle (1/\c);
\foreach \a/\b/\c in {
0 / 2 / 3 ,
0 /4 /15 ,
0 / 6 / 35 , 
0 / 8/ 63,
0 /10 / 99,
0 / 12 / 143
}
\draw[thick] (\a/\c,\b/\c) circle (1/\c)
          (\a/\c,-\b/\c) circle (1/\c) ;
\foreach \a/\b/\c/\d in {
3 / 4 /6, 	8 / 6 / 11,	5 / 12/ 14,	15/ 8 / 18,	8 / 12 / 23,	7 / 24 / 26,
24/	10/	27, 	21/	20/	30, 	16/	30/	35, 	3/	12/	38, 	35/	12/	38, 	24/	20/	39, 	9/	40/	42,
16/	36/	47, 	15/	24/	50, 	 48/	14/	51, 	45/	28/	54, 	24/	30/	59, 	40/	42/	59, 	11/	60/	62,
21/	36/	62, 	48/	28/	63, 	33/	56/	66, 	63/	16/	66, 	8/	24/	71, 	55/	48/	74, 	24/	70/	75,
48/	42/	83, 	80/	18/	83, 	13/	84/	86, 	77/	36/	86, 	24/	76/	87, 	24/	40/	87, 	39/	80/	90
}
\draw (\a/\c,\b/\c) circle (1/\c)       (-\a/\c,\b/\c) circle (1/\c)
          (\a/\c,-\b/\c) circle (1/\c)       (-\a/\c,-\b/\c) circle (1/\c) 
;
\node at (-1/2,0) [scale=1.7, color=black] {\sf 2};
\node at (1/2,0) [scale=1.7, color=black] {\sf 2};
\node at (0,2/3) [scale=1.6, color=black] {\sf 3};
\node at (0,-2/3) [scale=1.6, color=black] {\sf 3};
\node at (1/2,2/3) [scale=1.1, color=black] {\sf 6};
\node at (-1/2,2/3) [scale=1.1, color=black] {\sf 6};
\node at (1/2,-2/3) [scale=1.1, color=black] {\sf 6};
\node at (-1/2,-2/3) [scale=1.1, color=black] {\sf 6};
\node at (8/11,6/11) [scale=.77, color=black] {\sf 1$\!$1};
\node at (-8/11,6/11) [scale=.77, color=black] {\sf 1\!1};
\node at (8/11,-6/11) [scale=.77, color=black] {\sf 1\!1};
\node at (-8/11,-6/11) [scale=.77, color=black] {\sf 1\!1};
\node at (5/14,6/7) [scale=.6, color=black] {\sf 1\!4};
\node at (-5/14,6/7) [scale=.6, color=black] {\sf 1\!4};
\node at (5/14,-6/7) [scale=.6, color=black] {\sf 1\!4};
\node at (-5/14,-6/7) [scale=.6, color=black] {\sf 1\!4};
\end{tikzpicture}
\qquad
%
\begin{tikzpicture}[scale=1.6, rotate=90, shift={(0,2cm)}]  
\clip (-1.25,-2.1) rectangle (1.1,2.2);
\draw (1,-2) -- (1,3);
\draw (-1,-2) -- (-1,3);
\draw (0,0) circle (1);
\draw (0,2) circle (1);
\draw (0,-2) circle (1);
\foreach \a/\b/\c in {
3/4/4,  5/12/12,  7/24/24, 9/40/40  
}
\draw (\a/\c,\b/\c) circle (1/\c)    (\a/\c,-\b/\c) circle (1/\c)
          (-\a/\c,\b/\c) circle (1/\c)    (-\a/\c,-\b/\c) circle (1/\c)   ;
\foreach \a/\b/\c in {
8/    6/   9, 	
15/   8/   16 , 	
24/  20/  25, 	
24/  10/  25, 	
21/  20/  28,
16/	30/	33,    
35/	12/	36,
48/	42/	49,
48/	28/	49,
48/	14/	49,
45/	28/	52,
40/	42/	57,
33/	56/	64,
63/	48/	64,
63/	16/	64,
55/	48/	72,
24/	70/	73,
69/	60/	76,
80/	72/	81,
64/	60/	81,
80/	36/	81,
80/	18/	81
}
\draw (\a/\c, \b/\c) circle (1/\c)          (-\a/\c, \b/\c) circle (1/\c)
          (\a/\c,-\b/\c) circle (1/\c)         (-\a/\c,-\b/\c) circle (1/\c)
          (\a/\c,2-\b/\c) circle (1/\c)       (-\a/\c,2-\b/\c) circle (1/\c)
          (\a/\c,2+\b/\c) circle (1/\c)       (-\a/\c,2+\b/\c) circle (1/\c)
          (\a/\c,-2+\b/\c) circle (1/\c)       (-\a/\c,-2+\b/\c) circle (1/\c)
;
\node at (0,0) [scale=1.9, color=black] {\sf 1};
\node at (0,-2) [scale=1.9, color=black] {\sf 1};
\node at (0,2) [scale=1.9, color=black] {\sf 1};
\node at (-3/4,1) [scale=1.1, color=black] {\sf 4};
\node at (-3/4,-1) [scale=1.1, color=black] {\sf 4};
\node at (3/4,1) [scale=1.1, color=black] {\sf 4};
\node at (3/4,-1) [scale=1.1, color=black] {\sf 4};
\node at (7/8,8/6) [scale=.8, color=black] {\sf 9};
\node at (7/8,-8/6) [scale=.8, color=black] {\sf 9};
\node at (7/8,4/6) [scale=.8, color=black] {\sf 9};
\node at (7/8,-4/6) [scale=.8, color=black] {\sf 9};
\node at (-7/8,8/6) [scale=.8, color=black] {\sf 9};
\node at (-7/8,-8/6) [scale=.8, color=black] {\sf 9};
\node at (-7/8,4/6) [scale=.8, color=black] {\sf 9};
\node at (-7/8,-4/6) [scale=.8, color=black] {\sf 9};
\node at (-0.42,1) [scale=.5, color=black] {\sf 1\!2};
\node at (-0.42,-1) [scale=.5, color=black] {\sf 1\!2};
\node at (.42,1) [scale=.5, color=black] {\sf 1\!2};
\node at (.42,-1) [scale=.5, color=black] {\sf 1\! 2};
\end{tikzpicture}
\caption{Apollonian Window (left) and Apollonian Belt (right)}
\label{fig:ApollonianPackings}
\end{figure}

\newpage
\section*{Appendix B:  The graph of the tricycles in an Apollonian packing}

The process that defines the value of the depth of a tricycle
is just one of many paths in the (discrete) space of tricycles in a given Apollonian disk packing.
To see it in he context, let us define a graph in t: 
The vertices correspond to tricycles. 
Two tricycles are joined by an edge 
if they share two disks and the non-shared disks are tangent to each other in the packing.
The process $\pi$ makes a continuous (in the sense of the graph topology) path in this graph.
Figure \ref{fig:graph} presents this graph.
As shown, It consist of an infinite number of tetrahedrons connected pairwise by the vertices.
The vertices are 6-valent: this is obvious, 
for every tricycle may acquire anew disk in two ways, and lose one in three ways.b

\begin{figure}[H]
\centering
\includegraphics[scale=.9]{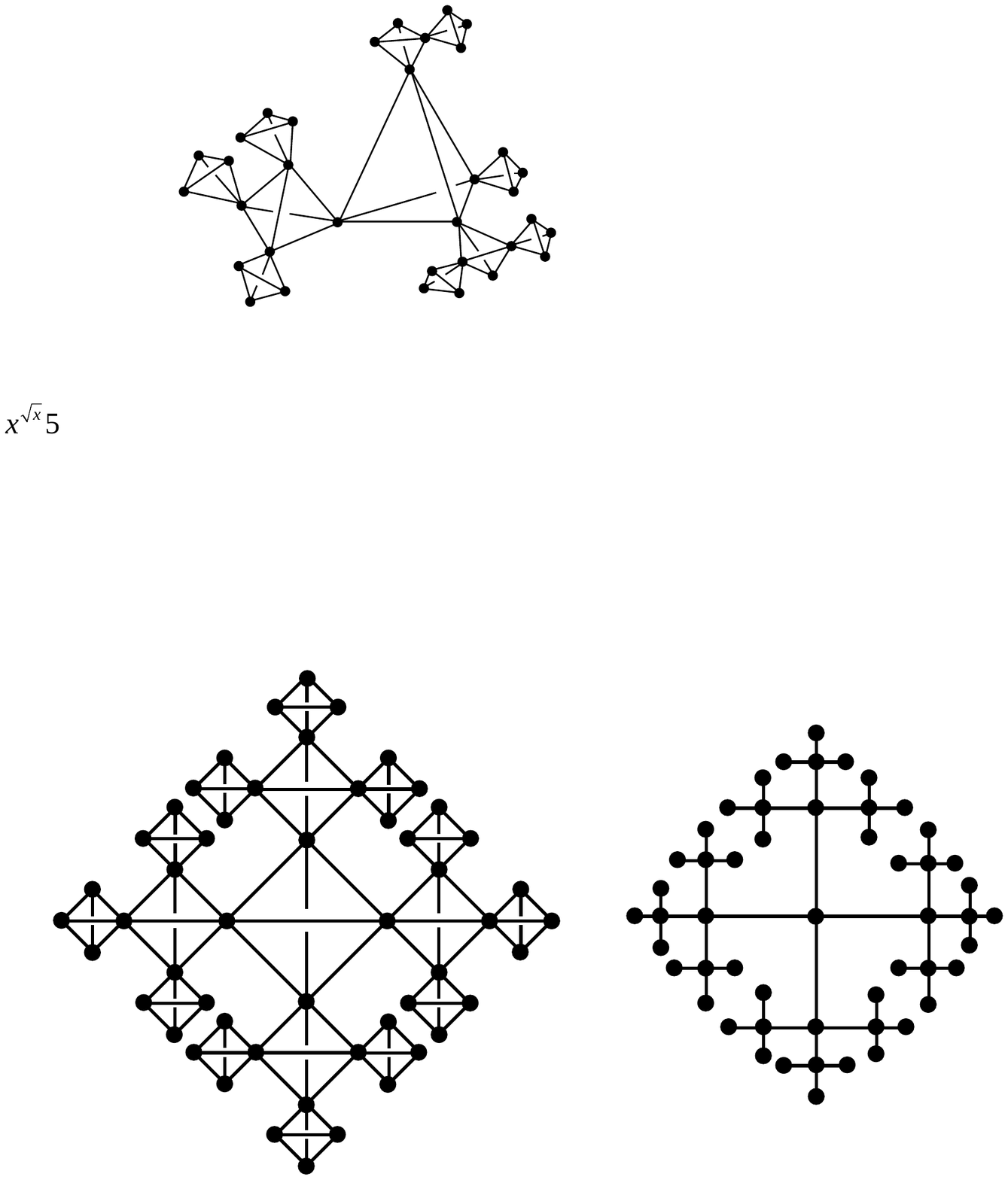}
\qquad
\includegraphics[scale=.6]{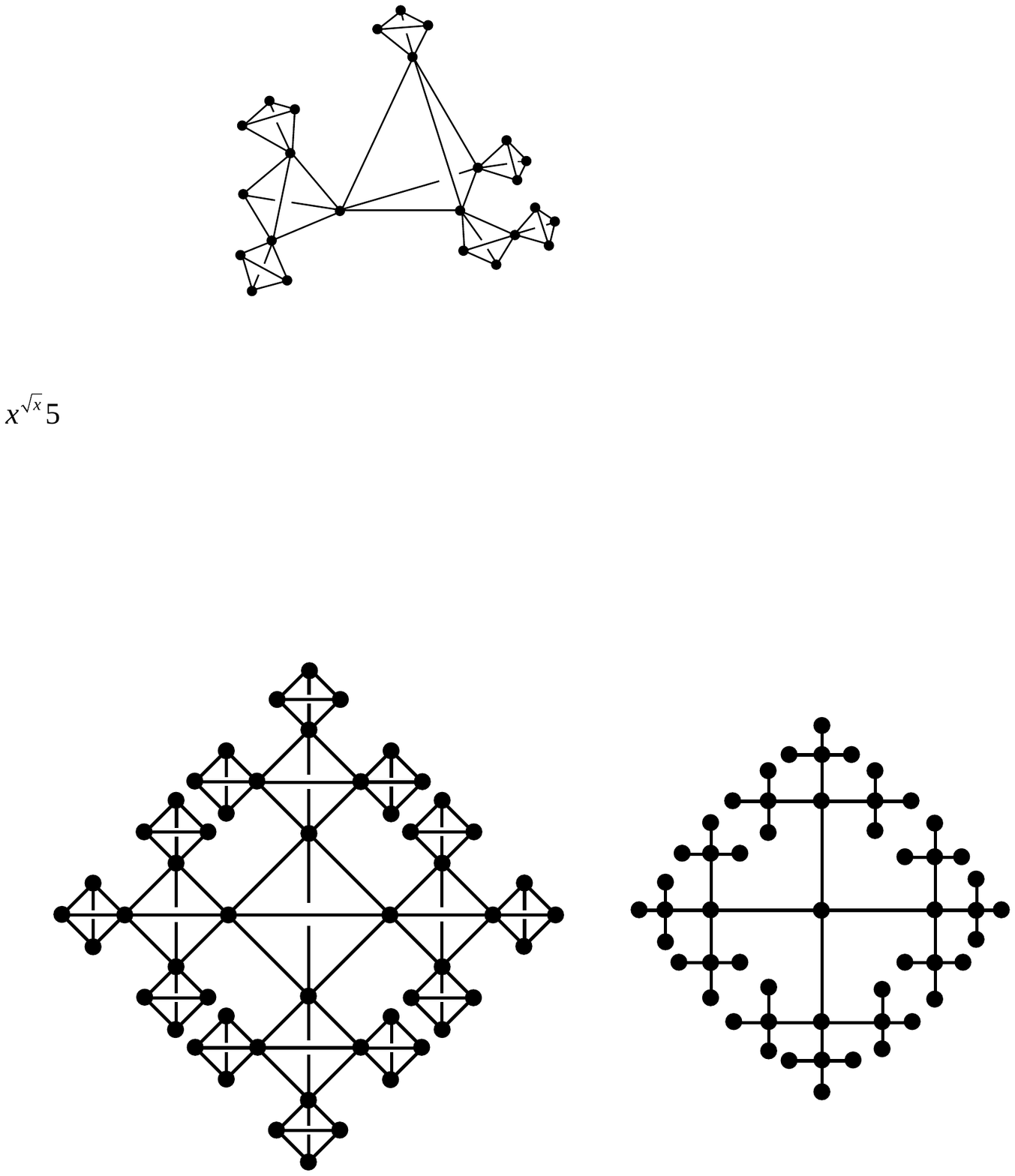}
\caption{The 3-cycle graph in an Apollonian gasket.  Left:  3-dimensionality emphasized.  Right: 2D version}
\label{fig:graph}
\end{figure}

The sub-graph of the tricycles that have value $\delta=0$ are the ones that 
contain the disks of non-positive curvature.
Apollonian packing can have only one negative disk,
and the the tricycles that contain it must have the other two disk in its corona.
Figure \ref{fig:subgraph} shows this subgraph.
The process seeks the shortest path to this subgraph starting from a given vertex.
 
\begin{figure}[H]
\centering
\includegraphics[scale=.57]{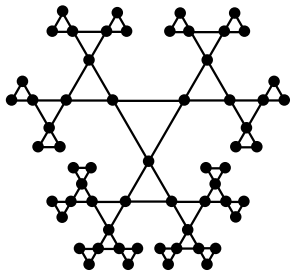}
\caption{The 3-cycle graph in an Apollonian gasket.  Left:  3-dimensionality emphasized.  Right: 2D version}
\label{fig:subgraph}
\end{figure}


\end{document}